\title{\sffamily Counting colorings of triangle-free graphs}
\date{}
\author{Anton~Bernshteyn}
\author{Tyler~Brazelton}
\author{Ruijia~Cao}
\author{Akum~Kang}
\address[A.B.~and T.B.]{\normalfont School of Mathematics, Georgia Institute of Technology, Atlanta, GA, USA}
\address[R.C.~and A.K.]{\normalfont School of Computer Science, Georgia Institute of Technology, Atlanta, GA, USA}
\email{bahtoh@gatech.edu}
\email{tbrazelton3@gatech.edu}
\email{rcao62@gatech.edu}
\email{kangakum@gatech.edu}
\thanks{Research of the first named author is partially supported by the NSF grant DMS-2045412.}
\newtheoremstyle{bfnote}%
{}{}%
{\slshape}{}%
{\bfseries}{\bfseries.}%
{ }%
{\thmname{#1}\thmnumber{ #2}\thmnote{ \ep{\normalfont{}#3}}}
\newtheoremstyle{claim}%
{}{}%
{\slshape}{}%
{\bfseries}{.}%
{ }%
{\thmname{#1}\thmnumber{ #2}\thmnote{ \ep{\normalfont{}#3}}}
\theoremstyle{bfnote}
\newtheorem{theo}{Theorem}[section]
\newtheorem*{theo*}{Theorem}
\newtheorem{prop}[theo]{Proposition}
\newtheorem{lemma}[theo]{Lemma}
\newtheorem*{claim*}{Claim}
\newtheorem{corl}[theo]{Corollary}
\newtheorem{conj}[theo]{Conjecture}
\newtheorem*{corl*}{Corollary}
\theoremstyle{definition}
\newtheorem{defn}[theo]{Definition}
\newtheorem*{defn*}{Definition}
\newtheorem*{exmp*}{Example}
\newtheorem{prob}[theo]{Problem}
\theoremstyle{remark}
\newtheorem*{ques*}{Question}
\newtheorem*{remk*}{Remark}
\theoremstyle{claim}
\newcounter{ForClaims}[section]
\newtheorem{smallclaim}{Claim}[ForClaims]
\newcommand*{\myproofname}{Proof}
\newenvironment{claimproof}[1][\myproofname]{\begin{proof}[#1]}{\end{proof}}
\newcommand{\0}{\varnothing}
\newcommand{\set}[1]{\{#1\}}
\newcommand{\N}{{\mathbb{N}}}
\renewcommand{\P}{\mathbb{P}}
\newcommand{\E}{\mathbb{E}}
\renewcommand{\epsilon}{\varepsilon}
\newcommand{\eps}{\epsilon}
\renewcommand{\phi}{\varphi}
\renewcommand{\theta}{\vartheta}
\renewcommand{\leq}{\leqslant}
\renewcommand{\geq}{\geqslant}
\newcommand{\defeq}{\coloneqq}
\newcommand{\im}{\mathrm{im}}
\newcommand{\bemph}[1]{{\normalfont#1}} 
\newcommand{\ep}[1]{\bemph{(}#1\bemph{)}} 
\newcommand{\emphd}[1]{{\fontseries{b}\selectfont\textsf{#1}}}
\newcommand{\pto}{\dashrightarrow}
\newcommand{\dom}{\mathrm{dom}}
\newcommand{\Bigparen}[1]{\left(#1\right)}
\newcommand{\biggparen}[1]{\biggl(#1\biggr)}
\newcommand{\ceiling}[1]{\lceil #1 \rceil}
\newcommand{\bigabs}[1]{|#1|}
\newcommand{\brac}[1]{\left[#1\right]}
\newcommand{\bigbrac}[1]{\Bigl[#1\Bigl]}
\newcommand{\col}[1]{C(#1)}
\newcommand{\pcol}[1]{C_\mathsf{p}(#1)}
\newcommand{\gcol}[1]{C_\mathsf{g}(#1)}
\newcommand{\dlist}[1]{L_f(#1)}
\newcommand{\bl}{\mathsf{blank}}
\newcommand{\Ext}{\mathsf{Ext}}
\numberwithin{equation}{section}
\titleformat{\section}[block]{\large\bfseries\sffamily}{\thesection.}{1ex}{}
\titleformat{\subsection}[block]{\bfseries\sffamily}{\thesubsection.}{1ex}{}
\titleformat{\subsubsection}[block]{\itshape}{\bfseries\sffamily\upshape\thesubsubsection.}{1ex}{}
\titlespacing*{\section}{0pt}{*2.5}{*1}
\titlespacing*{\subsection}{0pt}{*2.5}{*1}
\titlespacing*{\subsubsection}{0pt}{*2.5}{*1}
\newcommand{\neutralize}[1]{\expandafter\let\csname c@#1\endcsname\count@}
\setlist{topsep=3pt,itemsep=3pt}
\begin{document}
    \maketitle
    
    \begin{abstract}
        By a theorem of Johansson, every triangle-free graph $G$ of maximum degree $\Delta$ has chromatic number at most $(C+o(1))\Delta/\log \Delta$ for some universal constant $C > 0$. Using the entropy compression method, Molloy proved that one can in fact take $C = 1$. Here we show that for every $q \geq (1 + o(1))\Delta/\log \Delta$, the number $c(G,q)$ of proper $q$-colorings of $G$ satisfies
        \[
            c(G, q) \,\geq\, \left(1 - \frac{1}{q}\right)^m ((1-o(1))q)^n,
        \]
        where $n = |V(G)|$ and $m = |E(G)|$. Except for the $o(1)$ term, this lower bound is best possible as witnessed by random $\Delta$-regular graphs. When $q = (1 + o(1)) \Delta/\log \Delta$, our result yields the inequality
        \[
            c(G,q) \,\geq\, \exp\left((1 - o(1)) \frac{\log \Delta}{2} n\right),
        \]
        which improves an earlier bound of Iliopoulos and yields the optimal value for the constant factor in the exponent. Furthermore, this result implies the optimal lower bound on the number of independent sets in $G$ due to Davies, Jenssen, Perkins, and Roberts. An important ingredient in our proof is the counting method that was recently developed by Rosenfeld. As a byproduct, we obtain an alternative proof of Molloy's bound $\chi(G) \leq (1 + o(1))\Delta/\log \Delta$ using Rosenfeld's method in place of entropy compression (other proofs of Molloy's theorem using Rosenfeld's technique were given independently by Hurley and Pirot and Martinsson).
    \end{abstract}
    
    \section{Introduction}
    
    \subsection{Counting colorings}
    
    All graphs in this paper are finite, undirected, and simple. A celebrated theorem of Johansson \cite{Johansson} says that every triangle-free graph $G$ of maximum degree $\Delta$ satisfies $\chi(G) \leq (C + o(1))\Delta/\log \Delta$ for some universal constant $C > 0$. \ep{Here and throughout the paper, $o(1)$ indicates a function of $\Delta$ that approaches $0$ as 
    $\Delta \to \infty$.} The best currently known value for the constant $C$ is given by the following result of Molloy:
    
    \begin{theo}[{Molloy \cite{Molloy}}]\label{theo:Molloy}
        If $G$ is a triangle-free graph of maximum degree $\Delta$, then \[\chi(G) \,\leq\, (1 + o(1))\frac{\Delta}{\log\Delta}.\]
    \end{theo}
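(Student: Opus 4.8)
The quickest route is to deduce it from the lower bound on $c(G,q)$ that is this paper's main result: since that bound, $\left(1-\tfrac1q\right)^{m}\big((1-o(1))q\big)^{n}$, is strictly positive whenever $q \geq (1+o(1))\Delta/\log\Delta$, every triangle-free graph of maximum degree $\Delta$ admits a proper $q$-coloring for such $q$, which is exactly the assertion $\chi(G)\leq(1+o(1))\Delta/\log\Delta$. So the whole task reduces to proving $c(G,q)>0$ — and ideally the full lower bound — for $q = \lceil(1+\eps)\Delta/\log\Delta\rceil$, for each fixed $\eps>0$ and all large $\Delta$ (a standard limiting argument then gives the $o(1)$ statement). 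I would carry this out through Rosenfeld's counting method, which is in any case the technique driving the rest of the paper.

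Here is the plan. Fix an ordering $v_1,\dots,v_n$ of $V(G)$ and write $G_k\defeq G[\{v_1,\dots,v_k\}]$; let $d^+(v_k)$ be the number of neighbors of $v_k$ among $v_{k+1},\dots,v_n$, so $\sum_k d^+(v_k)=m$. For a proper $q$-coloring $\phi$ of $G_k$ and a vertex $v_j$ with $j>k$, let $L_\phi(v_j)\subseteq[q]$ be the set of colors not used by $\phi$ on neighbors of $v_j$ — the palette still available to $v_j$. Choosing a weight function $w$ on palette sizes with $w(q)=q$, track the weighted count
\[
    Z_k \,\defeq\, \sum_{\phi}\ \prod_{j>k} w\big(|L_\phi(v_j)|\big),
\]
where $\phi$ ranges over proper $q$-colorings of $G_k$; then $Z_0=q^{n}$ and $Z_n=c(G,q)$. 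Telescoping gives $c(G,q)=q^{n}\prod_{k=1}^{n}(Z_k/Z_{k-1})$, so it suffices to prove
\[
    Z_k/Z_{k-1} \,\geq\, (1-1/q)^{d^+(v_k)}(1-o(1)) \qquad\text{for every }k,
\]
since multiplying over $k$ then recovers $c(G,q)\geq(1-1/q)^{m}((1-o(1))q)^{n}$, positivity included.

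Lower-bounding $Z_k/Z_{k-1}$ is the crux. Conditioning on the coloring $\psi$ of $G_{k-1}$ and then on the color given to $v_k$, this becomes a one-vertex statement: the weight $w(|L_\psi(v_k)|)$ that $v_k$ contributes to $Z_{k-1}$ is replaced, in $Z_k$, by the sum over available colors $c\in L_\psi(v_k)$ of the product of the (slightly diminished) $w$-weights of the forward-neighbors of $v_k$, and one must check this exchange costs only a factor $(1-1/q)^{d^+(v_k)}(1-o(1))$. Triangle-freeness enters twice: the forward-neighbors of $v_k$ form an independent set, so the color of $v_k$ is the only way it touches them; and, more importantly, triangle-freeness is what makes a typical available palette $L_\phi(v)$ as large as $\approx q\,e^{-d_\phi(v)/q}$ — of order $\Delta^{\eps/(1+\eps)}/\log\Delta\to\infty$ for $q=(1+\eps)\Delta/\log\Delta$ — which is the slack that makes the exchange nearly lossless even though $w(s)/s$ has to level off for small $s$ (otherwise a vertex whose palette has shrunk to near $0$ would be ruinous).

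The main obstacle — and essentially the entire technical content — is to choose $w$ so that this step inequality holds with the \emph{optimal} leading constant. A crude choice, such as an exponential weight $w(s)=t^{s}$ with $t>1$ well-tuned, already makes everything go through but only reproves Johansson's theorem with a constant worse than $1$; forcing the constant down to $1$ requires $w$ to have the shape dictated by the local-occupancy heuristic for the palettes, together with honest bookkeeping of the $o(1)$ errors across all $n$ steps. A further delicate point is robustness: the step inequality should hold for \emph{every} partial coloring $\psi$, or at least after restricting to a well-chosen family of ``good'' partial colorings in which no future palette has yet collapsed — and securing this robustness is exactly the role that Rosenfeld's counting method plays in place of the concentration estimates and entropy-compression bookkeeping of Johansson and Molloy.
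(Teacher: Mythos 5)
Your reduction is correct and matches the paper's framing: Molloy's theorem follows immediately from positivity of the lower bound on $c(G,q)$ for $q \geq (1+\epsilon)\Delta/\log\Delta$, and the paper itself records this in \S\ref{sec:Rosenfeld} (after Lemma~\ref{lemma:good}, the existence of a good partial coloring plus a Lov\'asz Local Lemma completion already gives $\chi(G) \leq q$). Where you part ways with the paper is in how you propose to establish the counting bound. Your plan is a weight-function argument in the Hurley--Pirot/Martinsson mold: order the vertices, maintain $Z_k = \sum_\phi \prod_{j>k} w(|L_\phi(v_j)|)$ over proper colorings of the first $k$ vertices, and prove a one-step ratio inequality so that telescoping yields the bound. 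The paper explicitly cites that route as a \emph{different} proof of Molloy's theorem and does something else: it works with partial colorings in which each vertex may be left blank, counts all of them via the greedy telescoping of Lemma~\ref{lemma:pcolInduction} and Corollary~\ref{corl:partial}, then uses Rosenfeld's method in a non-standard ``does not shrink too much'' form (Lemma~\ref{lemma:good}) to show that a $(1-\eta)^n$ fraction of partial colorings are \emph{good} (palettes $\geq \ell$ and color-degrees $\leq d$), invokes the quantitative Lov\'asz Local Lemma to count completions of each good coloring (Lemma~\ref{lemma:comp}), and finally double-counts over the bipartite graph between good partial colorings and proper colorings. The coupon-collector lemma, the blank option, the notion of ``good,'' the LLL, and the double count are the paper's machinery and are absent from your plan; the weight function $w$ is central to your plan and absent from the paper.

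There is also a genuine gap in the proposal as written: you do not carry out the single step on which everything rests. You flag the choice of $w$ and the proof of $Z_k/Z_{k-1} \geq (1-1/q)^{d^+(v_k)}(1-o(1))$ as ``the main obstacle --- and essentially the entire technical content,'' and you note yourself that the inequality cannot hold for arbitrary $\psi$ (a vertex with a collapsed palette ruins it), so some restriction to a favorable class of colorings or a weight with a floor is unavoidable. That is precisely where Hurley--Pirot and Martinsson spend their effort, and it is precisely the difficulty the paper resolves differently via $\gcol{\cdot}$, Lemma~\ref{lemma:good}, and the LLL. So while the high-level reduction and the Rosenfeld-flavored strategy are sound, what you have is a roadmap rather than a proof: the step inequality, and the triangle-free coupon-collector analysis that would underlie it, are exactly the content still missing.
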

    
    In this paper we establish a lower bound on the number $c(G,q)$ of proper $q$-colorings of $G$ when $q \geq (1 + o(1))\Delta/\log\Delta$ \ep{i.e., when $G$ is $q$-colorable by Theorem~\ref{theo:Molloy}}. Here is our main result:
    
    \begin{theo}\label{theo:main}
        For each $\epsilon > 0$, there is $\Delta_0 \in \N$ such that the following holds. Let $G$ be a triangle-free graph of maximum degree at most $\Delta \geq \Delta_0$. Then, for every $q \geq (1 + \epsilon)\Delta/\log \Delta$, we have
        \begin{equation}\label{eq:main}
            c(G,q) \,\geq\, \left(1 - \frac{1}{q}\right)^m ((1-\delta)q)^n,
        \end{equation}
        where $n = |V(G)|$, $m = |E(G)|$, and $\delta = 4\exp(\Delta/q)/q$.
    \end{theo}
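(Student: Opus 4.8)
The plan is to expose a uniformly random proper $q$-coloring of $G$ one vertex at a time, turning \eqref{eq:main} into a per-vertex estimate, and to establish that estimate by Rosenfeld's counting method. The form of the bound reflects the heuristic that an edge is properly colored with ``probability'' $1-1/q$ and that, in a triangle-free graph, these events are only weakly correlated.

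First I would reduce to a local statement. By Theorem~\ref{theo:Molloy}, $G$ and all of its subgraphs are $q$-colorable, so we may fix an ordering $v_1,\dots,v_n$ of $V(G)$ and, writing $G_i\defeq G[\{v_1,\dots,v_i\}]$ and $d_i\defeq|N_G(v_i)\cap\{v_1,\dots,v_{i-1}\}|$ (so that $\sum_i d_i=m$), obtain the telescoping identity
\[
	c(G,q)\;=\;\prod_{i=1}^{n}\,\E_{\phi}\!\left[\,\Bigl|\,[q]\setminus\phi\bigl(N_G(v_i)\cap V(G_{i-1})\bigr)\,\Bigr|\,\right],
\]
where $\phi$ ranges over a uniformly random proper $q$-coloring of $G_{i-1}$. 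Since $G_{i-1}$ carries full lists, the uniform measure on its proper colorings is invariant under permuting colors, so the $i$-th factor equals $q\cdot\P_{\phi}[\,c_0\notin\phi(N_G(v_i)\cap V(G_{i-1}))\,]$ for any fixed color $c_0$. Because $G$ is triangle-free, $N_G(v_i)$ --- hence its trace $I_i\defeq N_G(v_i)\cap V(G_{i-1})$ --- is an independent set of size $d_i$; conditioning on the restriction of $\phi$ to $V(G_{i-1})\setminus I_i$ then makes the colors of the vertices of $I_i$ mutually independent, each uniform on its list of available colors. Thus the $i$-th factor equals
\[
	q\cdot\E_{\phi}\!\left[\ \prod_{u\in I_i}\left(1-\frac{\mathbf{1}\bigl[\,c_0\in L_\phi(u)\,\bigr]}{|L_\phi(u)|}\right)\ \right],\qquad L_\phi(u)\defeq[q]\setminus\phi\bigl(N_{G_{i-1}}(u)\bigr).
\]
If every list $L_\phi(u)$ had size close to $q$ this would be $\approx q(1-1/q)^{d_i}$ and \eqref{eq:main} would follow at once; the crux --- and the reason the statement is nontrivial --- is that $q$ may be much smaller than $\Delta$, so a priori $|L_\phi(u)|$ could be as small as $1$.

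The substance of the argument will be to show that such ``deficient'' configurations are rare enough that the displayed expectation is nonetheless at least $q(1-1/q)^{d_i}(1-\delta)$; this is precisely the local phenomenon behind Johansson's and Molloy's theorems, and it is here that Rosenfeld's counting method replaces entropy compression. Rather than studying one list in isolation, one runs a single induction (on the number of colored vertices) that simultaneously lower-bounds a restricted count of colorings and controls the deficiency they accumulate. The device is the class of \emph{good} partial colorings: proper maps $f\colon V(G)\to[q]\cup\{\bl\}$ in which every $\bl$-vertex $w$ keeps many more available colors than it has $\bl$-neighbours --- equivalently, the colored neighbours of $w$ use close to the expected number $\bigl(1-(1-1/q)^{\deg w}\bigr)q$ of distinct colors. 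One bounds $\gcol{\cdot}$ (the number of good partial colorings) from below and, for a good $f$, bounds $\Ext(f)$ (the number of full proper colorings extending $f$) from below; triangle-freeness is used at every step, since it makes the neighbourhood of each newly-colored vertex an independent set, so that the number of colors it receives concentrates around its mean and the list of a $\bl$-vertex shrinks predictably.

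The main obstacle is the error accounting: one must bound the deficiency charged against each vertex by $\delta=4\exp(\Delta/q)/q$ and not by a quantity growing with $\Delta$. This is exactly where $q\geq(1+\epsilon)\Delta/\log\Delta$ enters, since it gives $\exp(\Delta/q)\leq\Delta^{1/(1+\epsilon)}=o(q)$, so $\delta\to0$, the non-good colorings are negligible on the relevant scale, and the slack $\epsilon$ absorbs all lower-order terms from the concentration estimates. With the per-vertex bound $\E_{\phi}[\,|[q]\setminus\phi(I_i)|\,]\geq q(1-1/q)^{d_i}(1-\delta)$ in hand, multiplying over $i=1,\dots,n$ and using $\sum_i d_i=m$ yields $c(G,q)\geq(1-1/q)^m\bigl((1-\delta)q\bigr)^n$, which is \eqref{eq:main}.
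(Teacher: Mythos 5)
Your telescoping identity and the conditional-independence observation for $I_i$ (valid because $G$ is triangle-free) are correct, and they reduce the theorem to the per-vertex estimate $\E_{\phi}\bigl[\,|[q]\setminus\phi(I_i)|\,\bigr]\geq q(1-1/q)^{d_i}(1-\delta)$ for a uniformly random proper coloring $\phi$ of $G_{i-1}$. But that estimate is exactly where the entire difficulty lives, and your sketch does not prove it. Two concrete problems. First, the Jensen/coupon-collector computation that would give $q e^{-d_i/q}$ breaks down in your setting: after conditioning on $\phi$ outside $I_i$, each $u\in I_i$ picks a color uniformly from $L_\phi(u)$ \emph{without} a blank option, so the relevant product is $\prod_{u}(1-\mathbf{1}[c_0\in L_\phi(u)]/|L_\phi(u)|)$, which vanishes whenever some list is a singleton containing $c_0$; moreover the conditioning measure on the outside restriction is size-biased by $\prod_u|L_\phi(u)|$, not uniform. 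Controlling the probability of small lists under the uniform measure on \emph{full} proper colorings of $G_{i-1}$ is not a routine concentration step --- it is essentially as hard as the theorem itself. Second, the machinery you invoke to fill this hole (good partial colorings, a Rosenfeld induction on $\gcol{\cdot}$, extension counts) lives on a \emph{different} probability space: the paper's argument counts proper \emph{partial} colorings in which vertices may be $\bl$, precisely so that each neighbor chooses uniformly from $L_g(y)\cup\{\bl\}$ and the inequality $1-1/(x+1)\geq e^{-1/x}$ applies unconditionally. You never explain how a statement of the form ``most partial colorings are good'' transfers to the uniform measure on full proper colorings of an induced subgraph, and no such transfer is immediate.

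For comparison, the paper never proves (or needs) your per-vertex bound. It multiplies four global estimates: (i) $|\pcol{G}|\geq(1-1/q)^mq^n$ by the blank-allowed greedy/coupon-collector argument; (ii) a Rosenfeld-style induction showing $|\gcol{U+x}|\geq(1-\eta)|\gcol{U}|$, so almost all partial colorings are good; (iii) a quantitative Lov\'asz Local Lemma bound $|\mathsf{Comp}(g)|\geq(\ell/2)^k$ for completing a good coloring with $k$ blanks; and (iv) a double-counting step comparing $|\gcol{G}|$ with $|\col{G}|$, since one full coloring completes up to $\binom{n}{k}$ good partial colorings --- and it is this last step, not any concentration estimate, that produces the factor $(1+2/\ell)^{-n}$ and hence $\delta=4\exp(\Delta/q)/q$. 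Your proposal contains no analogue of steps (iii) and (iv), and its definition of ``good'' omits the per-color degree condition $\deg_f(\alpha)\leq d$ that the Local Lemma step requires. As written, the argument reduces the theorem to a strictly stronger unproven claim rather than proving it.
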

    
    It was shown by Csikv\'ari and Lin \cite[Corollary 1.2]{CL_Sidorenko} that if $G$ is bipartite, i.e., $G$ has no odd cycles, then $c(G,q) \geq (1-1/q)^mq^n$ for all $q \geq 1$ \ep{this is a special case of the so-called Sidorenko conjecture on the number of homomorphisms from a bipartite graph $G$ to a fixed graph $H$ \cite{Sidorenko}}. Our result asserts that approximately the same lower bound holds for triangle-free graphs $G$, under the assumption that $q \geq (1+o(1))\Delta/\log \Delta$. 
    
    The bound in Theorem~\ref{theo:main} has a natural probabilistic interpretation. Suppose $G$ is a graph with $n$ vertices and $m$ edges. If we assign a color from $[q] \defeq \set{1, \ldots, q}$ to each vertex of $G$ independently and uniformly at random, what is the probability $p(G, q)$ that the resulting $q$-coloring is proper? This problem is equivalent to computing $c(G,q)$ since $p(G,q) = c(G,q)/q^n$. For each edge $e \in E(G)$, let $D_e$ be the random event that the endpoints of $e$ get distinct colors. Then $\P[D_e] = 1 - 1/q$, 
    so if the events $(D_e \,:\,e \in E(G))$ were mutually independent, we would have
    \[p(G,q) \,=\, \left(1 - \frac{1}{q}\right)^m, \quad \text{or, equivalently,} \quad c(G,q) \,=\, \left(1 - \frac{1}{q}\right)^m q^n.\]
    Theorem~\ref{theo:main} says that when $G$ is triangle-free and $q \geq (1+\epsilon) \Delta/\log\Delta$, the actual value of $c(G,q)$ is not too much smaller than this ``naive'' bound. 
    Notice that, since $q \geq (1 + \epsilon)\Delta/\log \Delta$, 
    \[
        \delta \,=\, 4 \cdot \frac{\exp(\Delta/q)}{q} \,\leq\, \frac{4}{1+\epsilon} \cdot \log \Delta \cdot \Delta^{-\frac{\epsilon}{1 + \epsilon}} \,=\, o(1),
    \]
    which enables us to treat the factor $1 - \delta$ in \eqref{eq:main} as an error term. \ep{On the other hand, below the $\Delta/\log \Delta$ threshold, i.e., for $q < \Delta/\log \Delta$, the value $\delta$ tends to infinity as a function of $\Delta$.}
    
    It is natural to wonder how sharp our lower bound on $c(G,q)$ is. We show that it is optimal \ep{modulo the error term $1 - \delta$} for all values of $\Delta$ and $q$:
    
    \begin{theo}\label{theo:random}
        Fix positive integers $\Delta$ and $q$. For every sufficiently large $n \in \N$ such that $\Delta n$ is even, there exists a triangle-free $\Delta$-regular graph $G$ with
        \begin{equation}\label{eq:upper}
            c(G,q) \,\leq\, \left(1 - \frac{1}{q}\right)^{m} \, ((1 + \gamma)q)^n,
        \end{equation}
        where $n = |V(G)|$, $m = |E(G)| = \Delta n/2$, and $\gamma = 2\log n/n$.
    \end{theo}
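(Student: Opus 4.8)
The plan is to produce $G$ by a first-moment argument over the configuration model. Let $\mathbf{G} = \mathbf{G}(n, \Delta)$ be the random $\Delta$-regular multigraph on vertex set $[n]$ obtained by taking a uniformly random perfect matching of the set of $N \defeq \Delta n$ half-edges ($\Delta$ of them at each vertex); this is well-defined because $\Delta n$ is even. The crucial estimate, which I would isolate as a lemma, is that for \emph{every} map $\phi \colon [n] \to [q]$ --- no matter how unbalanced its color classes are ---
\[
    \P\!\left[\phi \text{ is a proper coloring of } \mathbf{G}\right] \,\leq\, C_q \sqrt{N} \left(1 - \tfrac{1}{q}\right)^{m}, \qquad m \defeq N/2,
\]
for a constant $C_q$ depending only on $q$. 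To prove this I would reveal the $m$ edges of $\mathbf{G}$ one at a time (each edge being a pair of half-edges). Just before step $k$, exactly $L_k \defeq N - 2(k-1)$ half-edges remain; if these carry $b_1, \ldots, b_q$ half-edges belonging to vertices of colors $1, \ldots, q$ respectively (which depends on the history so far), then the conditional probability that step $k$ creates a monochromatic edge equals $\sum_i \binom{b_i}{2}/\binom{L_k}{2}$, and by convexity of $x \mapsto \binom{x}{2}$ this is at least $q\binom{L_k/q}{2}/\binom{L_k}{2}$ --- a quantity \emph{independent of the history}, which is exactly what makes the bound uniform over all $\phi$. Multiplying the complementary bounds over all steps (and using the trivial bound $1$ on the $O(q)$ steps with $L_k < q$) telescopes: for $L_k \geq q$ the $k$-th factor equals $(1 - \tfrac{1}{q})\tfrac{L_k}{L_k - 1}$, and $\prod_k \tfrac{L_k}{L_k-1}$ taken over the even integers $L_k \in \{2, 4, \ldots, N\}$ is $\tfrac{N!!}{(N-1)!!} = 2^N/\binom{N}{N/2} \leq \sqrt{2N}$. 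This yields the displayed bound.

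Summing over the $q^n$ maps $\phi$ gives $\E[c(\mathbf{G}, q)] \leq C_q \sqrt{N}\,(1 - 1/q)^m q^n$. To pass from the multigraph $\mathbf{G}$ to an honest simple triangle-free graph I would invoke the standard fact (Bollob\'as; see also the monograph of Janson, {\L}uczak and Ruci\'nski) that in the configuration model the numbers of loops, of pairs of parallel edges, and of triangles converge jointly in distribution to independent Poisson variables with constant means, so the event $A$ that $\mathbf{G}$ is simultaneously simple and triangle-free satisfies $\P[A] \to c_\Delta > 0$. Since $c(\mathbf{G}, q) \geq 0$, we get $\E[c(\mathbf{G}, q) \mid A] \leq \E[c(\mathbf{G}, q)]/\P[A] \leq (2C_q/c_\Delta)\sqrt{N}\,(1 - 1/q)^m q^n$ once $n$ is large. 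Hence \emph{some} graph $G$ in the support of $\mathbf{G}$ conditioned on $A$ --- and such a $G$ is precisely a triangle-free $\Delta$-regular simple graph on $[n]$ with $m = \Delta n/2$ edges --- satisfies $c(G, q) \leq (2C_q/c_\Delta)\sqrt{\Delta n}\,(1 - 1/q)^m q^n$. Finally, with $\gamma = 2\log n/n \leq 1$ one has $(1 + \gamma)^n \geq e^{\gamma n/2} = e^{\log n} = n$, which exceeds $(2C_q/c_\Delta)\sqrt{\Delta n}$ for $n$ large; substituting this turns the previous bound into $c(G, q) \leq (1 - 1/q)^m \big((1+\gamma)q\big)^n$, as required.

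The step I expect to be the main obstacle is the lemma in the first paragraph: one needs a bound on $\P[\phi \text{ proper}]$ that simultaneously (i) holds uniformly over all colorings $\phi$, not merely the near-balanced ones for which the heuristic value $(1 - 1/q)^m$ is tightest, and (ii) loses only a $\mathrm{poly}(n)$ factor --- here $O(\sqrt{n})$ --- relative to that ideal value, since any multiplicative loss of the form $c^m$ with a constant $c > 1$ would be fatal after summing over $q^n$ colorings. The convexity-and-telescoping computation is exactly what delivers both: convexity absorbs the color-class imbalance at no cost, while the telescoping product collapses to a central binomial coefficient, which is only of order $\sqrt{n}$. Everything afterward --- the first-moment sum, conditioning on the positive-probability event $A$, and absorbing $\sqrt{n}$ into $(1+\gamma)^n$ --- is routine.
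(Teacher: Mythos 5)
Your proof is correct and follows essentially the same route as the paper: a first-moment computation over the pairing/configuration model, a sequential edge-revelation bound of the form $(1-1/q)^{m}\cdot \mathrm{poly}(n)$ on $\P[\phi\text{ proper}]$ that is uniform over all colorings $\phi$, and the fact that the random pairing is simultaneously simple and triangle-free with probability bounded away from zero. The only cosmetic differences are that the paper makes the per-step bound history-independent by always matching a point of the currently largest color class (arriving at the same factor $(1-\tfrac1q)\tfrac{L_k}{L_k-1}$ that your convexity argument yields) and concludes via Markov's inequality rather than a conditional expectation.
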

    
    We prove Theorem~\ref{theo:random} in \S\ref{sec:upper} by showing that the bound \eqref{eq:upper} holds for the random $\Delta$-regular graph with high probability.
    
    Let us now explore some of the consequences that can be derived from Theorem~\ref{theo:main} by applying it to specific values of $q$. Perhaps the most natural regime to consider is when $q$ is close to $\Delta/\log \Delta$, i.e., when $q = (1+\epsilon)\Delta/\log\Delta$ for a small constant $\epsilon > 0$. Iliopoulos \cite[Theorem 1.2]{FI} showed that in this case $c(G,q)$ is exponentially large in $n$, i.e., $c(G,q) \geq e^{a n}$ for some constant $a > 0$ that only depends on $\epsilon$ and $\Delta$. Specifically, Iliopoulos's calculations yield the value of order $a = \Theta(\epsilon/\log \Delta)$. Using Theorem~\ref{theo:main}, we obtain the optimal value for the constant factor in the exponent, namely $a = (1+\epsilon/(1+\epsilon) - o(1))\log\Delta/2$, which significantly improves Iliopoulos's result (the optimality follows from Theorem~\ref{theo:random}): 
    
    \begin{corl}\label{corl:small_q}
        The following holds for each $\epsilon > 0$. 
        Let $G$ be an $n$-vertex triangle-free graph of maximum degree at most $\Delta$. 
        If $q \geq (1 + \epsilon) \Delta/\log \Delta$, then
        \[
            c(G,q) \,\geq\, \exp\left(\left(1 + \frac{\epsilon}{1+\epsilon} - o(1)\right) \frac{\log \Delta}{2} \, n\right).
        \]
    \end{corl}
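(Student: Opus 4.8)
\emph{Proof proposal.} The plan is to obtain Corollary~\ref{corl:small_q} directly from Theorem~\ref{theo:main} by plugging in an arbitrary $q \geq (1+\epsilon)\Delta/\log\Delta$ and estimating the right-hand side of~\eqref{eq:main}. Since $\Delta(G) \leq \Delta$ we have $m \leq \Delta n/2$, and because $0 < 1 - 1/q < 1$, replacing the exponent $m$ by $\Delta n /2$ only decreases $(1 - 1/q)^m$; hence, taking logarithms in~\eqref{eq:main},
\[
    \log c(G,q) \;\geq\; \frac{\Delta n}{2}\log\!\left(1 - \frac1q\right) \,+\, n\log\big((1 - \delta)q\big) .
\]
I would then bound the two summands separately. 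For the first, the elementary estimate $\log(1-1/q) \geq -1/(q-1)$ together with $q - 1 = (1 - o(1))q$ gives $\tfrac{\Delta n}{2}\log(1 - 1/q) \geq -\tfrac{\Delta n}{2q}(1 + o(1))$; and since $q \geq (1+\epsilon)\Delta/\log\Delta$ we have $\Delta/q \leq \log\Delta/(1+\epsilon)$, so this term is at least $-\tfrac{n\log\Delta}{2(1+\epsilon)}(1 + o(1))$.

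For the second summand, recall that $\delta = 4\exp(\Delta/q)/q = o(1)$ — this was already observed right after the statement of Theorem~\ref{theo:main} — so $n\log(1 - \delta) = -o(n)$; moreover $\log q \geq \log\big((1+\epsilon)\Delta/\log\Delta\big) = \log\Delta - \log\log\Delta + \log(1+\epsilon) = (1 - o(1))\log\Delta$. Consequently $n\log((1-\delta)q) \geq (1 - o(1))\,n\log\Delta$. Combining the two estimates yields
\[
    \log c(G,q) \;\geq\; \left(1 - \frac{1}{2(1+\epsilon)} - o(1)\right) n\log\Delta ,
\]
and the identity $1 - \tfrac{1}{2(1+\epsilon)} = \tfrac12\big(1 + \tfrac{\epsilon}{1+\epsilon}\big)$ rewrites the coefficient into exactly the one appearing in Corollary~\ref{corl:small_q}, which completes the argument. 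Every $o(1)$ above is a function of $\Delta$ alone and does not depend on $q$ (each estimate used only the inequality $q \geq (1+\epsilon)\Delta/\log\Delta$), so the various $o(1)$ terms can be consolidated into a single one as in the statement; the finitely many values $\Delta < \Delta_0(\epsilon)$ not covered by Theorem~\ref{theo:main} are absorbed into that $o(1)$ convention.

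Since the corollary is essentially an exercise in substitution, there is no genuine obstacle to overcome; the only point requiring care is bookkeeping of the error terms — specifically, verifying that the factors $(1 - 1/q)^m$ and $(1-\delta)^n$, after taking logarithms, contribute only $o(n\log\Delta)$ to the exponent, and that all bounds hold uniformly over $q \geq (1+\epsilon)\Delta/\log\Delta$. This is precisely what makes the leading constant $\tfrac12$ (and hence the whole coefficient $\tfrac12(1 + \tfrac{\epsilon}{1+\epsilon})$) come out sharp, matching the upper bound furnished by Theorem~\ref{theo:random}.
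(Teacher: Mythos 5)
Your proposal is correct and follows exactly the route the paper intends (which it leaves as a one-line ``direct calculation''): substitute $m\leq\Delta n/2$ and the lower bound on $q$ into~\eqref{eq:main}, take logarithms, estimate $\log(1-1/q)\approx-1/q$ and $\log((1-\delta)q)=(1-o(1))\log\Delta$, and then simplify $1-\tfrac{1}{2(1+\epsilon)}=\tfrac12\bigl(1+\tfrac{\epsilon}{1+\epsilon}\bigr)$. The error-term bookkeeping and the uniformity-in-$q$ observation are both sound.
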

    \begin{proof}
        A direct calculation using \eqref{eq:main} and the bounds $q \geq (1 + \epsilon) \Delta/\log \Delta$ and $m \leq \Delta n/2$. \ep{We are using that $\log(1 + x) \sim x$ for real $x \to 0$.}
    \end{proof}

    A curious consequence of Corollary~\ref{corl:small_q} is the optimal lower bound on the number of independent sets in triangle-free graphs due to Davies, Jenssen, Perkins, Roberts:
    
    \begin{corl}[{Davies--Jenssen--Perkins--Roberts \cite[Theorem 2]{indep}}]\label{corl:indep}
        Let $G$ be an $n$-vertex triangle-free graph of maximum degree at most $\Delta$. Then
        \[
            i(G) \,\geq\, \exp\left((1 - o(1)) \,\frac{\log^2 \Delta}{2\Delta}\, n\right),
        \]
        where $i(G)$ denotes the number of independent sets in $G$. 
    \end{corl}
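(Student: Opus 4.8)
The idea is to deduce Corollary~\ref{corl:indep} from Corollary~\ref{corl:small_q} using the elementary inequality relating proper colorings to independent sets. If $f \colon V(G) \to [q]$ is a proper $q$-coloring of $G$, then each color class $f^{-1}(i)$ is an independent set, and $f$ is uniquely determined by the ordered tuple $(f^{-1}(1), \ldots, f^{-1}(q))$. Hence $f \mapsto (f^{-1}(1), \ldots, f^{-1}(q))$ is an injection from the set of proper $q$-colorings of $G$ into the set of ordered $q$-tuples of independent sets of $G$, so that
\[
    c(G,q) \,\leq\, i(G)^q, \qquad \text{i.e.,} \qquad i(G) \,\geq\, c(G,q)^{1/q}.
\]

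I would then apply Corollary~\ref{corl:small_q} with $q \defeq \ceiling{(1 + \epsilon)\Delta/\log\Delta}$, where $\epsilon = \epsilon(\Delta) > 0$ is chosen to tend to $0$ slowly as $\Delta \to \infty$ (for instance $\epsilon = 1/\sqrt{\log\Delta}$). Since $q \geq (1+\epsilon)\Delta/\log\Delta$, Corollary~\ref{corl:small_q} gives
\[
    c(G,q) \,\geq\, \exp\!\left(\left(1 + \frac{\epsilon}{1+\epsilon} - o(1)\right)\frac{\log\Delta}{2}\,n\right),
\]
where the $o(1)$ term, though a priori depending on $\epsilon$, ultimately depends only on $\Delta$ once $\epsilon$ has been fixed as a function of $\Delta$. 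Combining this with $i(G) \geq c(G,q)^{1/q}$ and the estimate $1/q \geq (1 - o(1))\log\Delta/((1+\epsilon)\Delta)$, I obtain
\[
    i(G) \,\geq\, \exp\!\left(\frac{(1 - o(1))\left(1 + \frac{\epsilon}{1+\epsilon} - o(1)\right)}{1+\epsilon}\cdot\frac{\log^2\Delta}{2\Delta}\,n\right).
\]
As $\epsilon \to 0$ the leading coefficient tends to $1$, and collapsing all error terms into a single $o(1)$ yields the desired inequality.

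The argument is short, and its only subtle point is the choice of $\epsilon = \epsilon(\Delta)$: it must vanish as $\Delta \to \infty$ so that the prefactor above converges to $1$, but it cannot vanish too quickly, because the $o(1)$ error furnished by Corollary~\ref{corl:small_q} comes from the quantity $\delta = 4\exp(\Delta/q)/q \leq \tfrac{4}{1+\epsilon}\log\Delta\cdot\Delta^{-\epsilon/(1+\epsilon)}$ in Theorem~\ref{theo:main} together with the approximation $\log(1+x)\sim x$, and $\delta$ is only $o(1)$ when $\epsilon\log\Delta \to \infty$. Any $\epsilon$ with $\tfrac{\log\log\Delta}{\log\Delta} \ll \epsilon \to 0$ (such as $\epsilon = 1/\sqrt{\log\Delta}$) reconciles these two requirements, completing the proof.
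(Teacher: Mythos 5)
Your proof is correct and is essentially the paper's own argument: both rely on the injection showing $c(G,q) \le i(G)^q$, apply Corollary~\ref{corl:small_q} with $q \approx (1+\epsilon)\Delta/\log\Delta$, and then send $\epsilon \to 0$. The only difference is presentational --- the paper fixes a constant $\epsilon$, obtains the exponent $\bigl(1-\epsilon^2/(1+\epsilon)^2-o(1)\bigr)\frac{\log^2\Delta}{2\Delta}n$, and concludes ``as $\epsilon$ can be taken arbitrarily small,'' whereas you make the implicit diagonalization explicit by choosing $\epsilon=\epsilon(\Delta)\to 0$ slowly enough that $\delta=o(1)$; both are valid.
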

    \begin{proof}
        Fix any $\epsilon > 0$ and set $q \defeq (1+\epsilon) \Delta/\log \Delta$. Since a proper $q$-coloring of $G$ is a sequence of $q$ independent sets in $G$ that partition $V(G)$, we have $c(G,q) \leq i(G)^q$. Therefore, by Corollary~\ref{corl:small_q},
        \[
            i(G) \,\geq\, c(G,q)^{1/q} \,\geq\, \exp\left(\left(1 -\frac{\epsilon^2}{(1+\epsilon)^2} - o(1)\right) \frac{\log^2 \Delta}{2\Delta} n\right).
        \]
        As $\epsilon$ can be taken arbitrarily small, the desired result follows.
    \end{proof}
    
    We find it intriguing that the crude way of bounding the number of colorings by counting independent sets employed in the above proof of Corollary~\ref{corl:indep} actually yields the optimal result (modulo the lower order term in the exponent).

    
    Theorem~\ref{theo:main} also has interesting consequences for larger values of $q$, e.g., for $q = \Delta + 1$:
    
    \begin{corl}\label{corl:large_q}
        Let $G$ be an $n$-vertex triangle-free graph of maximum degree at most $\Delta$. Then
        \[
            c(G,\Delta+1) \,\geq\, \left(\frac{\Delta}{\sqrt{e}} - O(1)\right)^{n}.
        \]
    \end{corl}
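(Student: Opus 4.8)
The plan is to apply Theorem~\ref{theo:main} directly with $q \defeq \Delta+1$, after observing that small values of $\Delta$ are trivial. Indeed, a greedy argument shows $c(G,\Delta+1) \geq 1$ for every graph of maximum degree at most $\Delta$, so the claimed inequality is automatic whenever the right-hand side is at most $1$; hence one may freely assume that $\Delta$ is as large as needed. In particular, for large $\Delta$ we have $\Delta + 1 \geq 2\Delta/\log\Delta$, so Theorem~\ref{theo:main} applies with $\epsilon = 1$.

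The rest is a short computation. With $q = \Delta+1$ the error parameter of Theorem~\ref{theo:main} is
\[
\delta \,=\, \frac{4\exp(\Delta/(\Delta+1))}{\Delta+1} \,\leq\, \frac{4e}{\Delta+1} \,=\, O(1/\Delta),
\]
so $(1-\delta)^n \geq (1 - O(1/\Delta))^n$. For the edge factor one uses $m \leq \Delta n/2$ together with the elementary inequality $\log(1-x) \geq -x - x^2$, valid for $x \in [0,1/2]$, applied at $x = 1/(\Delta+1)$:
\[
\left(1 - \frac{1}{\Delta+1}\right)^{m} \,\geq\, \left(1 - \frac{1}{\Delta+1}\right)^{\Delta n/2} \,\geq\, \exp\left(-\frac{n}{2} - \frac{n}{2(\Delta+1)}\right) \,\geq\, e^{-n/2}\,(1 - O(1/\Delta))^n.
\]
Substituting these two estimates and $q = \Delta+1$ into \eqref{eq:main} then gives
\[
c(G,\Delta+1) \,\geq\, e^{-n/2}\,\bigl((1 - O(1/\Delta))(\Delta+1)\bigr)^n \,=\, \left(\frac{\Delta+1-O(1)}{\sqrt e}\right)^n \,=\, \left(\frac{\Delta}{\sqrt e} - O(1)\right)^n.
\]

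There is no genuine obstacle here; this is a corollary in the literal sense. The only points requiring a little care are the two just flagged: verifying that the threshold hypothesis $q \geq (1+\epsilon)\Delta/\log\Delta$ of Theorem~\ref{theo:main} is met by $q = \Delta+1$ (immediate for large $\Delta$, while small $\Delta$ are absorbed into the trivial bound $c(G,\Delta+1) \geq 1$), and controlling $(1-1/(\Delta+1))^{\Delta/2}$ from below near $e^{-1/2}$, for which the inequality $\log(1-x) \geq -x-x^2$ on $[0,1/2]$ suffices.
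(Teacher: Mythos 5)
Your proposal is correct and takes essentially the same approach as the paper: substitute $q = \Delta+1$ into \eqref{eq:main}, use $m \leq \Delta n/2$, and absorb the resulting multiplicative error into the additive $O(1)$. You have simply spelled out the small threshold check ($\Delta+1 \geq (1+\epsilon)\Delta/\log\Delta$ for large $\Delta$, with small $\Delta$ swallowed by the $O(1)$) and the estimate $(1-1/(\Delta+1))^{\Delta/2} = e^{-1/2}(1-O(1/\Delta))$, both of which the paper leaves implicit.
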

    \begin{proof}
        Follows by substituting $\Delta + 1$ for $q$ in \eqref{eq:main} and using the bound $m \leq \Delta n/2$.
    \end{proof}
    
    Even though every graph of maximum degree $\Delta$ is $(\Delta+1)$-colorable, the conclusion of Corollary~\ref{corl:large_q} may fail for graphs that are not triangle-free. For instance, if $G$ is a disjoint union of $n/(\Delta+1)$ cliques of size $\Delta+1$, then, using Stirling's formula, we obtain
    \[
        c(G, \Delta + 1) \,=\, \left((\Delta + 1)!\right)^{n/(\Delta + 1)} \,=\, \left(\frac{\Delta}{e} + o(\Delta)\right)^n,
    \]
    which is less than the bound in Corollary~\ref{corl:large_q} roughly by a factor of $\sqrt{e}^n$.
    

    \subsection{Counting DP-colorings}
    
    Molloy proved his Theorem~\ref{theo:Molloy} not just for the ordinary chromatic number $\chi(G)$, but also for the list-chromatic number $\chi_\ell(G)$. In fact, as shown in \cite{Bernshteyn}, the same upper bound holds in the more general setting of \emph{DP-coloring} \ep{also known as \emph{correspondence coloring}}, introduced by Dvo\v{r}\'ak and Postle \cite{DP}. Recall that in the context of list-coloring, each vertex $v$ of a graph $G$ is given its own list $L(v)$ of colors to choose from, and the goal is to find a \emphd{proper $L$-coloring} of $G$, i.e., a mapping $f$ such that $f(v) \in L(v)$ for all $v \in V(G)$ and $f(u) \neq f(v)$ whenever $uv \in E(G)$. \ep{Ordinary coloring is a special case of this when all lists are the same.} DP-coloring further generalizes list-coloring by allowing the identifications between the colors in the lists to vary from edge to edge. Formally, DP-coloring is defined using an auxiliary graph called a \emph{DP-cover}:
    
    \begin{defn}
    A \emphd{DP-cover} of a graph $G$ is a pair $\mathcal{H} = (L, H)$, where $H$ is a graph and $L$ is an assignment of subsets $L(v) \subseteq V(H)$ to the vertices $v \in V(G)$ satisfying the following conditions:
    \begin{itemize}
        \item The family of sets $(L(v) \,:\, v \in V(G))$ is a partition of $V(H)$.
        \item For each $v \in V(G)$, $L(v)$ is an independent set in $H$.
        \item For $u$, $v \in V(G)$, the edges of $H$ between $L(u)$ and $L(v)$ form a matching; this matching is empty whenever $uv \notin E(G).$
    \end{itemize}
    We call the vertices of $H$ \emphd{colors}. 
    For $\alpha \in V(H)$, we let $v_\alpha$ denote the \emphd{underlying vertex} of $\alpha$ in $G$, i.e., the unique vertex $v \in V(G)$ such that $\alpha \in L(v)$. If two colors $\alpha$, $\beta \in V(H)$ are adjacent in $H$, we say that they \emphd{correspond} to each other and write $\alpha \sim \beta$.
    
    An \emphd{$\mathcal{H}$-coloring} is a mapping $f \colon V(G) \to V(H)$ such that $f(v) \in L(v)$ for all $v \in V(G)$. Similarly, a \emphd{partial $\mathcal{H}$-coloring} is a partial map $f \colon V(G) \pto V(H)$ such that $f(v) \in L(v)$ for all $v \in \dom(f)$. 
    A \ep{partial} $\mathcal{H}$-coloring $f$ is \emphd{proper} if the image of $f$ is an independent set in $H$, i.e., if $f(u) \not \sim f(v)$ 
    for all $u$, $v \in \dom(f)$. 
    
    A DP-cover $\mathcal{H} = (L,H)$ is \emphd{$q$-fold} for some $q \in \N$ if $|L(v)| = q$ for all $v \in V(G)$. The \emphd{DP-chromatic number} of $G$, denoted by $\chi_{DP}(G)$, is the smallest $q$ such that $G$ admits a proper $\mathcal{H}$-coloring with respect to every $q$-fold DP-cover $\mathcal{H}$.
\end{defn}

    To see that list-coloring is a special case of DP-coloring, consider the following construction. Suppose that each vertex $v$ of a graph $G$ is given a list $L(v)$ of colors to choose from. Define \[L'(v) \,\defeq\, \set{(v, \alpha) \,:\, \alpha \in L(v)}\] \ep{thus, the sets $L'(v)$ for different vertices $v$ are disjoint} and let $H$ be the graph with vertex set \[V(H) \,\defeq\, \set{(v,\alpha) \,:\, v \in V(G), \, \alpha \in L(v)}\] in which vertices $(v,\alpha)$ and $(u,\beta)$ are adjacent if and only if $uv \in E(G)$ and $\alpha = \beta$. Then $\mathcal{H} \defeq (H, L')$ is a DP-cover of $G$ and there is a natural one-to-one correspondence between the proper $L$-colorings and the proper $\mathcal{H}$-colorings of $G$.
    
    We prove the following generalization of Theorem~\ref{theo:main}:
    
    \begin{theo}\label{theo:mainDP}
        For each $\epsilon > 0$, there is $\Delta_0 \in \N$ such that the following holds. Let $G$ be a triangle-free graph of maximum degree at most $\Delta \geq \Delta_0$. Then, for all $q \geq (1 + \epsilon)\Delta/\log \Delta$ and every $q$-fold DP-cover $\mathcal{H}$ of $G$, the number of proper $\mathcal{H}$-colorings of $G$ is at least 
        \[
            \left(1 - \frac{1}{q}\right)^m ((1-\delta)q)^n,
        \]
        where $n = |V(G)|$, $m = |E(G)|$, and $\delta = 4 \exp(\Delta/q)/q$.
    \end{theo}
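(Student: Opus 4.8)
The plan is to reveal the colors of the vertices one at a time, following Rosenfeld's counting method. Fix an arbitrary ordering $v_1, \dots, v_n$ of $V(G)$; for $0 \le i \le n$, let $G_i$ be the subgraph induced by $\{v_1, \dots, v_i\}$ with the induced DP-cover $\mathcal{H}_i$, write $N_i$ for the number of proper $\mathcal{H}_i$-colorings, and let $d_i$ be the number of neighbors of $v_i$ in $\{v_1, \dots, v_{i-1}\}$, so that $\sum_{i=1}^{n} d_i = m$. Restricting a proper $\mathcal{H}_i$-coloring to $G_{i-1}$ sets up a correspondence under which each proper $\mathcal{H}_{i-1}$-coloring $f$ has exactly $|\dlist{v_i}|$ extensions, where $\dlist{v_i} \subseteq L(v_i)$ is the set of colors not corresponding to $f(u)$ for any (back-)neighbor $u$ of $v_i$; hence $N_i = \sum_f |\dlist{v_i}|$. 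Consequently it is enough to prove the one-step estimate
\[
  N_i \;\ge\; q\left(1-\frac{1}{q}\right)^{d_i}(1-\delta)\,N_{i-1} \qquad (1 \le i \le n),
\]
since taking the product over $i$ and using $\sum_i d_i = m$ reproduces exactly the bound of Theorem~\ref{theo:mainDP}. As a byproduct, $N_n > 0$ recovers Molloy's Theorem~\ref{theo:Molloy} in its DP form: for $q > (1+\epsilon)\Delta/\log\Delta$ the graph is $q$-colorable, hence $\chi(G) \le (1+o(1))\Delta/\log\Delta$.

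Dividing the one-step estimate by $N_{i-1}$, what must be shown is that the average value of $|\dlist{v_i}|$ over a uniformly random proper $\mathcal{H}_{i-1}$-coloring $f$ is at least $q(1-1/q)^{d_i}(1-\delta)$ --- a counting analogue of Molloy's key lemma that, for a vertex with $d$ already-colored neighbors, the expected number of still-available colors is about $q(1-1/q)^d$. The heuristic behind it is that, because $G$ is triangle-free, the back-neighbors of $v_i$ form an independent set, so the $d_i$ colors they receive in a ``typical'' proper coloring are essentially unconstrained by one another and behave like independent uniform colors, each killing a given color of $L(v_i)$ with probability $1/q$ (the matching condition ensures a neighbor kills at most one color of $L(v_i)$). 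Making this rigorous is the crux, and the difficulty is that $|\dlist{v_i}|$ cannot be bounded below pointwise --- it is frequently $0$ when $d_i$ is close to $\Delta$ --- so no pointwise or greedy argument can work; one must control the sum $\sum_f |\dlist{v_i}|$ globally.

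This is exactly what Rosenfeld's method is for. Writing the defect as $q N_{i-1} - N_i = \sum_{\alpha \in L(v_i)} |\{f : \alpha \text{ is blocked by } f\}|$, the plan is to bound, for each color $\alpha$, the number of proper $\mathcal{H}_{i-1}$-colorings that block $\alpha$ (i.e.\ assign to some back-neighbor $u$ of $v_i$ the color of $L(u)$ matched with $\alpha$) by relating such colorings to proper colorings of smaller structures --- subgraphs of $G_{i-1}$, or covers of $G_{i-1}$ with a few colors deleted --- and then closing the resulting recursion. To make this self-contained one proves, by induction on $|V(G)|$ (peeling off the last vertex of the ordering), a strengthened ``potential-function'' form of the bound valid uniformly for all triangle-free graphs of maximum degree at most $\Delta$ and all DP-covers with lists of size $q$ or slightly less, the slack in list sizes being what absorbs the deletions introduced when a vertex together with its (independent) neighborhood is peeled off. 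The hypothesis $q \ge (1+\epsilon)\Delta/\log\Delta$ is used precisely to keep this recursion convergent: it is what forces the accumulated error $\delta = 4\exp(\Delta/q)/q$ to be $o(1)$, whereas for $q < \Delta/\log\Delta$ the quantity $\exp(\Delta/q)/q$ diverges and the bound breaks down. Finally, nothing in this scheme uses more about DP-covers than the fact that the edges between two color classes form a matching, so the same argument yields Theorem~\ref{theo:mainDP} verbatim, with Theorem~\ref{theo:main} recovered by taking the DP-cover built from ordinary lists. The genuinely delicate point, and the expected main obstacle, is the bookkeeping in the last step: choosing the potential so that the recursion closes with exactly the constant in $\delta$, handling the range where $d_i$ is comparable to $\Delta$, and keeping every error term uniform in $G$.
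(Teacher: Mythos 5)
Your reduction is clean and the arithmetic checks out: if the one-step estimate $N_i \ge q(1-1/q)^{d_i}(1-\delta)N_{i-1}$ holds, multiplying over $i$ gives the theorem. But that estimate \emph{is} the theorem, and the proposal never proves it; everything after ``Making this rigorous is the crux'' is a description of difficulties rather than an argument. The specific obstruction is the following. To bound $\sum_f |\dlist{v_i}|$ you would condition on the restriction $g$ of $f$ to $G_{i-1}\setminus N(v_i)$ and use triangle-freeness to say the extension to the back-neighborhood is a product of independent uniform choices, one from each list $L_g(y)$. For \emph{full} colorings there is no blank option, so the relevant product is $\prod_{y}(1-1/|L_g(y)|)$ rather than $\prod_y(1-1/(|L_g(y)|+1))$, and this is not controlled by the coupon-collector computation: if some $|L_g(y)|$ equals $1$ the corresponding factor is $0$, and since $d_i$ can be as large as $\Delta\gg q$ the worst case kills every color in $L(v_i)$. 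So one must first show that for \emph{typical} $g$ the residual lists of the uncolored/neighboring vertices are large (of size roughly $q\exp(-\Delta/q)$) and that each color has bounded residual degree. That is precisely the content of the ``strengthened potential-function form'' you invoke, and it is the entire technical core of the problem; asserting that such a potential exists and that ``the recursion closes with exactly the constant in $\delta$'' is not a proof. Your proposed per-step loss factor $(1-\delta)$ with $\delta=4\exp(\Delta/q)/q$ is also unjustified --- nothing in the sketch produces that constant.

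For comparison, the paper avoids this issue by never running the recursion on full colorings. It works with \emph{partial} colorings, where every neighbor always has the option $\mathsf{blank}$; the coupon-collector lemma then gives the lossless bound $|\pcol{G}|\ge(1-1/q)^mq^n$ (Lemma~\ref{lemma:pcolInduction} and Corollary~\ref{corl:partial}). Rosenfeld's induction is used not to prove your one-step estimate but to show that only an exponentially small fraction of partial colorings have a flaw (Lemma~\ref{lemma:good}), so that a $(1-o(1))^n$ fraction are good; good colorings are then completed to full colorings via the quantitative Local Lemma (Lemma~\ref{lemma:comp}), and the factor $(1-\delta)^n$ arises only at the very end, from double counting how many good partial colorings can complete to the same full coloring. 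A direct full-coloring recursion of the kind you describe can be made to work (this is the Hurley--Pirot/Martinsson route the paper cites), but it requires formulating and proving a two-parameter inductive hypothesis tracking list sizes, which is exactly the step your proposal leaves open.
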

    
    The problem of counting DP-colorings was studied by Kaul and Mudrock in \cite{Kaul}, where they introduced the \emphd{DP-color function} $P_{DP}(G, q)$. By definition, $P_{DP}(G,q)$ is the minimum number of proper $\mathcal{H}$-colorings of $G$ taken over all $q$-fold covers $\mathcal{H}$ of $G$. Using this terminology, we can say that Theorem~\ref{theo:mainDP} provides a lower bound on $P_{DP}(G,q)$ for triangle-free graphs $G$ of maximum degree $\Delta$ when $q \geq (1 + o(1))\Delta/\log \Delta$.
    
    An interesting feature of the lower bound given by Theorem~\ref{theo:mainDP} is that it is sharp \ep{modulo the error term $1 - \delta$} for \emph{every} graph $G$, as was shown by Kaul and Mudrock:
    
    \begin{theo}[{Kaul--Mudrock \cite[Proposition 16]{Kaul}}]\label{theo:DPsharp}
        For every graph $G$ with $n$ vertices and $m$ edges and every $q \geq 1$, there is a $q$-fold DP-cover $\mathcal{H}$ of $G$ such that the number of proper $\mathcal{H}$-colorings of $G$ is at most $(1-1/q)^m q^n$.
    \end{theo}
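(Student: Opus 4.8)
The plan is to prove Theorem~\ref{theo:DPsharp} by a first-moment argument: we construct a \emph{random} $q$-fold DP-cover $\mathcal{H}$ of $G$ with independent matchings and show that the expected number of proper $\mathcal{H}$-colorings is exactly $(1-1/q)^m q^n$. Since a nonnegative integer-valued random variable cannot always strictly exceed its mean, some realization of $\mathcal{H}$ admits at most $(1-1/q)^m q^n$ proper colorings, which is the desired bound. Morally, this says the "naive" independence heuristic for the distinctness events $D_e$ discussed after Theorem~\ref{theo:main} can actually be attained in the DP setting by decoupling the edges via independent random identifications.

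Concretely, I would build $\mathcal{H} = (L, H)$ as follows. Fix an arbitrary linear order on $V(G)$ and identify the colors over each vertex with $\Z_q$: set $L(v) \defeq \set{v} \times \Z_q$, so $V(H) = V(G) \times \Z_q$ and $(L(v) : v \in V(G))$ is automatically a partition of $V(H)$; we place no edges of $H$ inside any $L(v)$, so each $L(v)$ is independent. For every edge $e = uv \in E(G)$ with $u < v$, pick a shift $c_e \in \Z_q$ uniformly at random, all choices mutually independent, and join $(u,i)$ to $(v,j)$ in $H$ exactly when $j - i \equiv c_e \pmod q$. Then the $H$-edges between $L(u)$ and $L(v)$ form a perfect matching when $uv \in E(G)$ and are empty otherwise, so $\mathcal{H}$ is a genuine $q$-fold DP-cover. (Equivalently one may take a uniformly random perfect matching between $L(u)$ and $L(v)$ for each edge; the shift model is just a concrete instance.)

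Next I would carry out the expectation computation. Fix an arbitrary $\mathcal{H}$-coloring $f$, written $f(v) = (v, x_v)$. Recall from the definition that $f$ is proper iff $f(u) \not\sim f(v)$ for every edge $uv \in E(G)$, and by construction $f(u) \sim f(v)$ iff $c_{uv} \equiv x_v - x_u$; since $c_{uv}$ is uniform on $\Z_q$, this event has probability exactly $1/q$. Because the shifts $(c_e : e \in E(G))$ are independent, the "bad edge" events for distinct edges are independent, hence $\P[f \text{ is proper}] = (1 - 1/q)^m$. Summing over all $q^n$ choices of $f$ and using linearity of expectation gives that the expected number of proper $\mathcal{H}$-colorings of $G$ equals $(1 - 1/q)^m q^n$, and the theorem follows.

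I do not expect a substantial obstacle: the argument is elementary. The points that need care are (i) checking that the construction satisfies all three DP-cover axioms — in particular that $H$ restricted to $L(u) \cup L(v)$ is a matching and is empty unless $uv \in E(G)$; (ii) the independence of the bad-edge events, which is precisely why the matchings must be chosen independently across edges; and (iii) the identity $\P[f(u) \sim f(v)] = 1/q$, which is where $q$-foldness enters. One could also derandomize, since any fixed choice of shifts yielding at most the average count suffices, but the probabilistic phrasing is the shortest route.
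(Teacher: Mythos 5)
Your argument is correct, and it is complete: the shift construction satisfies all three DP\-/cover axioms, the bad\-/edge events are genuinely independent because the shifts $c_e$ are, and the first\-/moment conclusion (some realization of the random cover has at most the expected number $(1-1/q)^m q^n$ of proper colorings) is sound. Note that the paper itself does not prove Theorem~\ref{theo:DPsharp} --- it is quoted from Kaul and Mudrock \cite{Kaul} --- and your first\-/moment computation over random covers is essentially the standard proof of this fact, so there is nothing to reconcile.
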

    

    
    \subsection{Overview of the proof}\label{subsec:ProofOverview}
    
    In this subsection we outline the key ideas that go into the proofs of our main results. For simplicity, we shall focus on Theorem~\ref{theo:main}; the more general argument needed to establish Theorem~\ref{theo:mainDP} in the DP-coloring setting is virtually the same, except for a few minor technical changes.
    
    Let $G$ be a triangle-free graph of maximum degree at most $\Delta$ and let $q \geq (1 + \epsilon) \Delta/\log \Delta$. Our approach is inspired by Molloy's proof of the bound $\chi(G) \leq q$ \ep{i.e., of Theorem~\ref{theo:Molloy}}. To explain Molloy's strategy, we need to introduce some notation and terminology. Let $f \colon V(G) \pto [q]$ be a proper partial $q$-coloring of $G$. For each vertex $v \in V(G)$, we let $L_f(v)$ be the set of all colors $\alpha \in [q]$ such that no neighbor of $v$ is colored $\alpha$. Also, for $\alpha \in [q]$, let $\deg_f(\alpha, v)$ be the number of uncolored neighbors $u$ of $v$ such that $\alpha \in L_f(u)$. Define the following numerical parameters:
    \[
        \ell \,\defeq\, \frac{q}{2\exp(\Delta/q)} \qquad \text{and} \qquad d \,\defeq\, \frac{q}{50\exp(\Delta/q)}. 
    \]
    The partial coloring $f$ is \emphd{good} if it satisfies the following two conditions:
    \begin{itemize}
        \item for every uncolored vertex $v$, $|L_f(v)| \geq \ell$; and
        \item for every uncolored vertex $v$ and $\alpha \in L_f(v)$, $\deg_f(\alpha, v) \leq d$.
    \end{itemize}
    In order to find a proper $q$-coloring of $G$, Molloy establishes two auxiliary results:
    \begin{enumerate}[label=\ep{\normalfont{}M\arabic*}]
        \item\label{item:Molloy_1} $G$ admits a good proper partial $q$-coloring.
        
        \item\label{item:Molloy_2} Every good partial coloring can be extended to a proper $q$-coloring of the entire graph $G$.
    \end{enumerate}
    Statement \ref{item:Molloy_2} is proved using the Lov\'asz Local Lemma and is by now standard \ep{its first appearance is in the paper \cite{Reed} by Reed; see also \cite[\S4.1]{MolloyReed} for a textbook treatment}. On the other hand, Molloy's proof of \ref{item:Molloy_1} was highly original and combined several novel ideas. In particular, it relied on a technique introduced by Moser and Tardos in \cite{MT} and called the \emph{entropy compression method} \ep{the name is due to Tao \cite{Tao}}. Initially designed as a means to establish an algorithmic version of the Lov\'asz Local Lemma, entropy compression has by now become an invaluable tool in the study of graph coloring; see, e.g., \cite{Esperet, BCGR, Duj} for a sample of its applications. An alternative approach---with the so-called Lopsided Lov\'asz Local Lemma taking the place of entropy compression---was developed by the first named author in \cite{Bernshteyn}. The ideas of \cite{Molloy} and \cite{Bernshteyn} have been pursued further by a number of researchers in order to strengthen and extend Theorem~\ref{theo:Molloy} in various ways \cite{fraction,locallistsize,DKPS}.
    
    Very recently, Rosenfeld \cite{Rosenfeld} discovered a remarkably simple new technique that can be used as a substitute for entropy compression. A number of applications of Rosenfeld's method to \ep{hyper}graph coloring appear in the paper \cite{WW} by Wanless and Wood, which also describes a general framework for applying Rosenfeld's technique to coloring problems. One benefit of Rosenfeld's approach \ep{in addition to its simplicity} is that it not only proves the existence of an object with certain properties \ep{such as a coloring}, but also gives a lower bound on the number of such objects. This makes it particularly well-suited for our purposes. As a byproduct of our proof of Theorem~\ref{theo:main}, we obtain a new simple proof of \ref{item:Molloy_1} \ep{and hence of Molloy's Theorem~\ref{theo:Molloy}} using Rosenfeld's technique in lieu of entropy compression or the Lopsided Lov\'asz Local Lemma. We should remark that entropy compression-style arguments can also be used to obtain counting results \ep{this is the approach taken by Iliopoulos in \cite{FI}}, and it is quite likely that bounds similar or even equivalent to ours can be established by other methods. However, we found that Rosenfeld's technique works especially well for this problem, and we believe that this application of inductive counting to graph coloring is interesting in its own right. We also note that a different proof of Molloy's theorem using Rosenfeld's method was given in \cite{Pirot} by Hurley and Pirot and simplified by Martinsson \cite{Martinsson} \ep{their work was carried out independently from ours}.
    
    Let us now describe the main steps in our argument in more detail. 
    
    \begin{enumerate}[wide]
        
        \item Besides the use of the entropy compression method, Molloy's proof of \ref{item:Molloy_1} involved another novel ingredient, namely a version of the coupon-collector theorem for elements drawn uniformly at random from sets of varying sizes \cite[Lemma 7]{Molloy}. Our proof uses this result as well. In fact, we need a slightly stronger version of it, because in our setting $q$ may be significantly larger than $(1+\epsilon)\Delta/\log \Delta$ and because we need the error bounds to be more precise. We state and prove this strengthening in \S\ref{sec:coupon}.
    
        \item Next, in \S\ref{sec:partial}, we give a lower bound on the number of proper partial colorings of $G$. This is done via an analysis of the greedy coloring algorithm. That is, we color the vertices of $G$ one by one, where each next vertex is either left uncolored or assigned an arbitrary color that has not yet been used by any of its neighbors. Using the coupon-collector result from \S\ref{sec:coupon}, we argue that, on average, each vertex will have many available colors to choose from, which yields the desired lower bound on the total number of proper partial colorings. The bound we obtain here is already sufficient to deduce the lower bound on the number of independent sets in $G$ given by Corollary~\ref{corl:indep}.
        
        \item In \S\ref{sec:Rosenfeld} we use a version of Rosenfeld's method to argue that a fairly large fraction of all proper partial colorings of $G$ are good \ep{in particular, a good coloring exists}. Combined with the result in \S\ref{sec:partial}, this yields a lower bound on the number of good colorings.
        
        \item As mentioned earlier, a simple application of the Lov\'asz Local Lemma shows that every good partial coloring $f$ can be extended to a proper coloring of $G$. We need to know not only that such an extension exists, but also how many such extensions there are. Thankfully, the Lov\'asz Local Lemma can be used to derive an explicit lower bound on the probability that a random extension of $f$ is proper, which can be translated into a lower bound on the number of such extensions. This is accomplished in \S\ref{sec:ext}.
        
        \item Finally, in \S\ref{sec:finish}, we combine all the above results to derive a lower bound on the number of proper colorings of $G$. Some care has to be taken because the same proper coloring of $G$ may arise as an extension of several good partial colorings. Nevertheless, we are able to use a double counting argument to account for this and obtain the desired result. Curiously, the double counting at this stage is the main contributor to the error term $1 - \delta$ in the statement of Theorems~\ref{theo:main} and \ref{theo:mainDP}.
    \end{enumerate}
    
    \section{Probabilistic preliminaries}{\label{sec: prob}}
    
    The following is a standard form of the Chernoff inequality:
    
    
    \begin{lemma}[{\cite[Theorem 2.3(b)]{McDiarmid1998}}]\label{lemma:Chernoff}
    Suppose that $X_1$, \ldots, $X_n$ are independent random variables with $0\le X_i \le 1$ for each $i$. Let $X \defeq \sum_{i=1}^nX_i$. Then, for any $s>0$,
    \[\P\left[X\ge (1+s)\E[X]\right] \,\le\, \exp\left(-\frac{s^2\E[X]}{2(1+s/3)}\right).\]
    \end{lemma}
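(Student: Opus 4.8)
The plan is to run the standard exponential moment (Chernoff--Bernstein) argument. Write $\mu \defeq \E[X] = \sum_{i=1}^n \E[X_i]$. For any $t > 0$, the event $\set{X \geq (1+s)\mu}$ is contained in $\set{e^{tX} \geq e^{t(1+s)\mu}}$, so Markov's inequality together with the independence of the $X_i$ gives
\[
\P[X \geq (1+s)\mu] \,\leq\, e^{-t(1+s)\mu}\,\E[e^{tX}] \,=\, e^{-t(1+s)\mu}\prod_{i=1}^n \E[e^{tX_i}].
\]

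Next I would bound each factor $\E[e^{tX_i}]$. Since $x \mapsto e^{tx}$ is convex and $0 \leq X_i \leq 1$, we have the pointwise inequality $e^{tX_i} \leq 1 + (e^t - 1)X_i$ (the right-hand side is the chord of $e^{tx}$ over $[0,1]$), whence $\E[e^{tX_i}] \leq 1 + (e^t - 1)\E[X_i] \leq \exp\bigl((e^t - 1)\E[X_i]\bigr)$. Taking the product over $i$ yields $\E[e^{tX}] \leq \exp\bigl((e^t - 1)\mu\bigr)$, and therefore
\[
\P[X \geq (1+s)\mu] \,\leq\, \exp\Bigl(\mu\bigl(e^t - 1 - t(1+s)\bigr)\Bigr).
\]
The exponent, as a function of $t$, is minimized at $t = \log(1+s) > 0$; substituting this value gives the classical bound $\P[X \geq (1+s)\mu] \leq \exp\bigl(-\mu\,\psi(s)\bigr)$, where $\psi(s) \defeq (1+s)\log(1+s) - s$.

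Finally, it remains to compare $\psi(s)$ with the exponent in the statement. For this I would invoke the elementary inequality
\[
(1+s)\log(1+s) - s \,\geq\, \frac{s^2}{2(1 + s/3)} \qquad (s > 0),
\]
which is standard: both sides vanish at $s = 0$, and after clearing the denominator $3 + s$ one checks by a routine derivative comparison that the left-hand side dominates for all $s > 0$. Plugging this into the bound from the previous step produces exactly $\P[X \geq (1+s)\E[X]] \leq \exp\bigl(-s^2\E[X]/(2(1+s/3))\bigr)$, as desired. Every step here is textbook; the only place requiring any care is the verification of this last scalar inequality, which is also where the slightly mysterious factor $1 + s/3$ comes from.
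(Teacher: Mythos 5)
Your argument is correct: this lemma is a standard Chernoff--Bernstein inequality, and the paper does not prove it at all but simply cites it from McDiarmid's survey, so there is no in-paper proof to compare against. Your exponential-moment derivation, the chord bound $e^{tX_i}\le 1+(e^t-1)X_i$, the optimization at $t=\log(1+s)$ giving the exponent $-\E[X]\bigl((1+s)\log(1+s)-s\bigr)$, and the final scalar inequality $(1+s)\log(1+s)-s\ge s^2/(2(1+s/3))$ (which follows, e.g., from $\log(1+s)\ge 2s/(2+s)$ and a polynomial comparison) are all the standard textbook steps and are sound.
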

    
    We also need a version of the Chernoff bound for negatively correlated random variables, introduced by Panconesi and Srinivasan \cite{panconesi}. 
    We say that $\set{0,1}$-valued random variables $X_1$, \ldots, $X_m$ are \emphd{negatively correlated} if for all $I \subseteq \set{1,2,\ldots,m}$, 
    \[
    \P \brac{\bigcap_{i \in I} \set{X_i = 1}} \,\leq\, \prod_{i\in I} \P\left[X_i = 1\right].
    \]
    
    \begin{lemma}[{\cite[Theorem 3.2]{panconesi}, \cite[Lemma 3]{Molloy}}]\label{lemma:NegativeChernoff}
    Let $X_1$, \ldots, $X_m$ be $\set{0,1}$-valued random variables. Set $Y_i \defeq 1-X_i$ and $X \defeq \sum_{i=1}^mX_i$. If $Y_1$, \ldots, $Y_m$ are negatively correlated, then
    \[
        \P[X<\E[X] - t] \,<\, \exp{\left(-\frac{t^2}{2\E{[X]}}\right)}.
    \]
    \end{lemma}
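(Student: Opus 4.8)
The plan is to run the exponential-moment (Chernoff) method on the lower tail of $X$, invoking the negative-correlation hypothesis only once, to bound a moment generating function. Write $p_i \defeq \E[X_i] = \P[X_i = 1]$ and $\mu \defeq \E[X] = \sum_{i=1}^m p_i$. Fix $\lambda > 0$. Since $X_i + Y_i = 1$, we have $e^{-\lambda X} = e^{-\lambda m}\prod_{i=1}^m e^{\lambda Y_i}$, and because each $Y_i \in \set{0,1}$ we may write $e^{\lambda Y_i} = 1 + (e^\lambda - 1)Y_i$. Multiplying out the product gives
\[\prod_{i=1}^m e^{\lambda Y_i} \,=\, \sum_{I \subseteq \set{1,\dots,m}} (e^\lambda - 1)^{|I|} \prod_{i \in I} Y_i,\]
and $\prod_{i\in I}Y_i$ equals $1$ exactly on the event $\bigcap_{i\in I}\set{Y_i = 1}$, so $\E\big[\prod_{i\in I}Y_i\big] = \P\big[\bigcap_{i\in I}\set{Y_i = 1}\big]$. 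The coefficients $(e^\lambda - 1)^{|I|}$ are all nonnegative because $\lambda > 0$; hence, taking expectations and applying negative correlation term by term,
\[\E\Big[\prod_{i=1}^m e^{\lambda Y_i}\Big] \,\leq\, \sum_{I \subseteq \set{1,\dots,m}} (e^\lambda - 1)^{|I|}\prod_{i\in I}\E[Y_i] \,=\, \prod_{i=1}^m\big(1 + (e^\lambda - 1)(1 - p_i)\big).\]
Refactoring, $\E[e^{-\lambda X}] = e^{-\lambda m}\E\big[\prod_i e^{\lambda Y_i}\big] \leq \prod_{i=1}^m\big(p_i e^{-\lambda} + 1 - p_i\big)$; that is, the relevant moment generating function of $X$ is at most that of a sum of \emph{independent} Bernoulli variables with the same marginal means. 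It is precisely to keep all coefficients of the expansion nonnegative that we pass through the $Y_i$'s rather than expanding $e^{-\lambda X}$ in the $X_i$'s, whose coefficients would alternate in sign and which are not assumed negatively correlated.

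From here the argument is the classical Chernoff computation. We may assume $0 < t < \mu$, the cases $t = 0$ (where the claim reduces to $\P[X \geq \mu] > 0$) and $t \geq \mu$ (where $X \geq 0$ forces the left-hand side to vanish) being immediate. By Markov's inequality applied to $e^{-\lambda X}$ together with $1 + x \leq e^x$,
\[\P[X \leq \mu - t] \,\leq\, e^{\lambda(\mu - t)}\,\E[e^{-\lambda X}] \,\leq\, e^{\lambda(\mu - t)}\prod_{i=1}^m\big(p_i e^{-\lambda} + 1 - p_i\big) \,\leq\, \exp\!\big(\lambda(\mu - t) + \mu(e^{-\lambda} - 1)\big)\]
for every $\lambda > 0$. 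Choosing $\lambda \defeq \log\big(\mu/(\mu - t)\big) > 0$ and writing $x \defeq t/\mu \in (0,1)$, the exponent becomes $-\mu\big((1-x)\log(1-x) + x\big)$, so $\P[X \leq \mu - t] \leq \exp\!\big(-\mu((1-x)\log(1-x) + x)\big)$.

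It remains to invoke the elementary inequality $(1-x)\log(1-x) + x \geq x^2/2$ valid for $x \in [0,1)$: the difference $h(x)$ of the two sides satisfies $h(0) = h'(0) = 0$ and $h''(x) = x/(1-x) \geq 0$, so $h \geq 0$, with strict inequality on $(0,1)$. Therefore $\P[X < \E[X] - t] \leq \P[X \leq \mu - t] \leq \exp(-\mu x^2/2) = \exp\!\big(-t^2/(2\E[X])\big)$, and the inequality is strict since $h(x) > 0$ for $x \in (0,1)$. The only step requiring genuine care is the moment-generating-function domination in the first paragraph — making the direction of the negative-correlation inequality line up with the sign of $e^\lambda - 1$ — after which everything is the textbook Chernoff estimate.
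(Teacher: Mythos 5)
Your proof is correct, and since the paper only cites this lemma (to Panconesi--Srinivasan and to Molloy's Lemma~3) rather than proving it, there is no in-paper argument to diverge from; your route is exactly the standard one from those sources, namely dominating $\E[e^{-\lambda X}]$ via the nonnegative expansion of $\prod_i(1+(e^\lambda-1)Y_i)$ and negative correlation, followed by the classical Chernoff optimization and the inequality $(1-x)\log(1-x)+x\geq x^2/2$. The edge cases $t=0$ and $t\geq\mu$ and the strictness of the final bound are all handled properly.
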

    
    We shall use the Lov\'asz Local Lemma in the following quantitative form:

    \begin{lemma}[{\cite[Lemma 5.1.1]{prob_method}}]\label{lemma:LLLgeneral}
    Let $ \mathcal {A}$ be a finite set of random events. For each $A \in {\mathcal {A}}$, let $\Gamma (A)$ be a subset of $\mathcal{A} \setminus \set{A}$ such that $A$ is mutually independent from the events in $\mathcal{A} \setminus (\Gamma(A) \cup \set{A})$. If there exists an assignment of reals $ x:{\mathcal {A}}\to [0,1)$ to the events such that
    \[ \forall A\in {\mathcal {A}} \ :\ \P[A] \,\leq\, x(A) \prod _{B\in \Gamma (A)}(1-x(B)),
    \]
    then the probability that no event in $\mathcal{A}$ happens is at least $\prod_{A \in \mathcal{A}}(1-x(A))$. 
    \end{lemma}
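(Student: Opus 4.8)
The plan is to give the classical inductive proof of the asymmetric Lov\'asz Local Lemma. The core of the argument is an auxiliary claim, proved by induction on $|S|$: for every event $A \in \mathcal{A}$ and every subset $S \subseteq \mathcal{A} \setminus \set{A}$, the event $\bigcap_{B \in S}\overline B$ has positive probability (where $\overline B$ denotes the complement of $B$) and
\[
\P\left[A \,\Big|\, \bigcap_{B \in S}\overline B\right] \,\leq\, x(A).
\]
The base case $|S| = 0$ is immediate: the bound reads $\P[A] \leq x(A)$, which follows from the hypothesis since $\prod_{B \in \Gamma(A)}(1 - x(B)) \leq 1$.

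For the inductive step I would split $S = S_1 \sqcup S_2$ with $S_1 \defeq S \cap \Gamma(A)$ and $S_2 \defeq S \setminus \Gamma(A)$, and write the conditional probability as a ratio,
\[
\P\left[A \,\Big|\, \bigcap_{B \in S}\overline B\right] \,=\, \frac{\P\left[A \cap \bigcap_{B \in S_1}\overline B \,\Big|\, \bigcap_{C \in S_2}\overline C\right]}{\P\left[\bigcap_{B \in S_1}\overline B \,\Big|\, \bigcap_{C \in S_2}\overline C\right]}.
\]
The numerator is at most $\P[A \mid \bigcap_{C \in S_2}\overline C] = \P[A] \leq x(A)\prod_{B \in \Gamma(A)}(1 - x(B))$, using that $A$ is mutually independent from the events in $S_2 \subseteq \mathcal{A} \setminus (\Gamma(A) \cup \set{A})$. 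For the denominator, enumerate $S_1 = \set{B_1, \ldots, B_k}$, apply the chain rule to rewrite it as $\prod_{i=1}^k \P[\overline{B_i} \mid \bigcap_{j < i}\overline{B_j} \cap \bigcap_{C \in S_2}\overline C]$, and bound each factor below by $1 - x(B_i)$ via the induction hypothesis, which applies since the conditioning set there has size strictly less than $|S|$. As each factor $1 - x(B_i)$ lies in $(0, 1]$ and $S_1 \subseteq \Gamma(A)$, the denominator is at least $\prod_{B \in \Gamma(A)}(1 - x(B)) > 0$; dividing yields the bound $x(A)$, and the positivity assertion for $S$ falls out of the same computation.

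With the claim in hand, the conclusion follows by one more application of the chain rule: enumerating $\mathcal{A} = \set{A_1, \ldots, A_n}$,
\[
\P\left[\bigcap_{i=1}^n \overline{A_i}\right] \,=\, \prod_{i=1}^n \P\left[\overline{A_i} \,\Big|\, \bigcap_{j < i}\overline{A_j}\right] \,\geq\, \prod_{i=1}^n (1 - x(A_i)) \,=\, \prod_{A \in \mathcal{A}}(1 - x(A)).
\]
The argument involves no genuine difficulty; the one point requiring care is bookkeeping, namely verifying that every conditioning event appearing in the ratio manipulations and chain-rule expansions has positive probability, which is precisely why the positivity statement is threaded through the induction together with the probability bound. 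Since this is a textbook result, one could alternatively just invoke \cite{prob_method}; I would include the short argument above for the sake of completeness.
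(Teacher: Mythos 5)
Your proof is correct and is exactly the standard inductive argument for the asymmetric Lov\'asz Local Lemma found in the cited reference \cite{prob_method}; the paper itself does not reprove this lemma but simply invokes that source. Your handling of the one delicate point (threading the positivity of the conditioning events through the induction) is the right way to make the chain-rule manipulations legitimate, so nothing further is needed.
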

    
    More specifically, we will need the following consequence of Lemma~\ref{lemma:LLLgeneral}:
    
    \begin{corl}[Quantitative Symmetric Lov\'asz Local Lemma]\label{corollary:LLL}
        Let $ \mathcal {A}$ be a finite set of random events. For each $A \in {\mathcal {A}}$, let $\Gamma (A)$ be a subset of $\mathcal{A} \setminus \set{A}$ such that $A$ is mutually independent from the events in $\mathcal{A} \setminus (\Gamma(A) \cup \set{A})$. Suppose that for all $A \in \mathcal{A}$, $\P[A] \leq p$ and $|\Gamma(A)| \leq \mathcal{D}$, where $p \in [0,1)$ and $\mathcal{D} \in \N$. If $e p (\mathcal{D} + 1) \leq 1$, then
        \[
            \P\brac{\bigcap_{A \in \mathcal{A}} \overline{A}} \,\geq\, \left(1 - \frac{1}{\mathcal{D} + 1}\right)^{|\mathcal{A}|}.
        \]
    \end{corl}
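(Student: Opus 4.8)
The plan is to deduce the Quantitative Symmetric Lovász Local Lemma (Corollary~\ref{corollary:LLL}) from the general quantitative form (Lemma~\ref{lemma:LLLgeneral}) by choosing a constant assignment $x(A) = x$ for all $A \in \mathcal{A}$ and verifying the hypothesis of Lemma~\ref{lemma:LLLgeneral} under the assumption $e p (\mathcal{D}+1) \leq 1$. First I would set $x \defeq 1/(\mathcal{D}+1)$, so that $x \in [0,1)$ since $\mathcal{D} \in \N$. Then for each $A \in \mathcal{A}$ we need to check that
\[
    \P[A] \,\leq\, x \prod_{B \in \Gamma(A)} (1 - x).
\]
Since $|\Gamma(A)| \leq \mathcal{D}$ and $0 < 1 - x < 1$, we have $\prod_{B \in \Gamma(A)} (1-x) \geq (1-x)^{\mathcal{D}}$, so it suffices to show $p \leq x (1-x)^{\mathcal{D}}$, i.e.
\[
    p \,\leq\, \frac{1}{\mathcal{D}+1} \left(1 - \frac{1}{\mathcal{D}+1}\right)^{\mathcal{D}} \,=\, \frac{1}{\mathcal{D}+1} \left(\frac{\mathcal{D}}{\mathcal{D}+1}\right)^{\mathcal{D}}.
\]

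The main (and only) nontrivial point is the elementary inequality $(1 - 1/(\mathcal{D}+1))^{\mathcal{D}} \geq 1/e$ for all integers $\mathcal{D} \geq 0$; equivalently, $(1 + 1/\mathcal{D})^{\mathcal{D}} \leq e$ for $\mathcal{D} \geq 1$ (the case $\mathcal{D} = 0$ being trivial). This is standard: one can take logarithms and use $\log(1 + 1/\mathcal{D}) \leq 1/\mathcal{D}$, or simply recall that $(1+1/\mathcal{D})^{\mathcal{D}}$ is an increasing sequence with limit $e$. Granting this, $x(1-x)^{\mathcal{D}} \geq 1/(e(\mathcal{D}+1)) \geq p$ by the hypothesis $ep(\mathcal{D}+1) \leq 1$, which verifies the condition of Lemma~\ref{lemma:LLLgeneral} (using $\P[A] \leq p$).

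Therefore Lemma~\ref{lemma:LLLgeneral} applies and gives
\[
    \P\brac{\bigcap_{A \in \mathcal{A}} \overline{A}} \,\geq\, \prod_{A \in \mathcal{A}} (1 - x(A)) \,=\, \left(1 - \frac{1}{\mathcal{D}+1}\right)^{|\mathcal{A}|},
\]
which is exactly the claimed bound. The only subtlety worth a sentence in the writeup is handling the degenerate case $\mathcal{D} = 0$ (so every event is mutually independent from all others and $p \leq 1/e < 1$), where the bound reads $\P[\bigcap \overline{A}] \geq 0^{|\mathcal{A}|}$ — vacuously true, though in that case one in fact gets $\prod(1 - \P[A]) > 0$ directly; so no real obstacle arises.
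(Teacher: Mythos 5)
Your proof is correct and is exactly the paper's argument: the paper's entire proof reads ``Take $x(A) = 1/(\mathcal{D}+1)$ in the statement of Lemma~\ref{lemma:LLLgeneral},'' and your writeup simply fills in the verification $x(1-x)^{\mathcal{D}} \geq 1/(e(\mathcal{D}+1)) \geq p$ that the paper leaves implicit. Your aside on the degenerate case $\mathcal{D}=0$ (where $x=1$ falls outside the range $[0,1)$ required by Lemma~\ref{lemma:LLLgeneral}, but the claimed bound is vacuous) is a reasonable extra precaution.
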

    \begin{proof}
        Take $x(A) = 1/(\mathcal{D} + 1)$ in the statement of Lemma~\ref{lemma:LLLgeneral}.
    \end{proof}

    \section{Proof of Theorem~\ref{theo:mainDP}}\label{sec:proof}
    
    \subsection{Standing assumptions and notation}
    
    Throughout \S\ref{sec:proof}, we fix the following data:
    \begin{itemize}
        \item a real number $0 < \epsilon < 1$;
        \item an integer $\Delta$, assumed to be large enough as a function of $\epsilon$;
        \item an integer $q$ satisfying $q \geq (1+\epsilon)\Delta/\log\Delta$;
        \item a triangle-free graph $G$ of maximum degree at most $\Delta$ with $n$ vertices and $m$ edges;
        \item a $q$-fold DP-cover $\mathcal{H} = (L,H)$ of $G$.
    \end{itemize}
    As mentioned in \S\ref{subsec:ProofOverview}, we also define
    \[
        \ell \,\defeq\, \frac{q}{2\exp(\Delta/q)} \qquad \text{and} \qquad d \,\defeq\, \frac{q}{50\exp(\Delta/q)}. 
    \]
    
    The \emphd{neighborhood} $N(v)$ of a vertex $v \in V(G)$ is the set of all neighbors of $v$ in $G$. The \emphd{closed neighborhood} of $v$ is the set $N[v] \defeq N(v) \cup \set{v}$, and the \emphd{second neighborhood} $N^2[v]$ is the set of all vertices at distance at most $2$ from $v$. For a subset $U \subseteq V(G)$, we write $N_U(v) \defeq N(v) \cap U$ and $\deg_U(v) \defeq |N_U(v)|$. Given $\alpha \in V(H)$, the notation $N(\alpha)$, $N[\alpha]$, etc.~is defined analogously but with respect to the graph $H$ instead of $G$. For a set $U \subseteq V(G)$ and a vertex $x \in V(G) \setminus U$, we use $U + x$ to denote the set $U \cup \set{x}$.
    
    When $f$ is a partial function and $f(x)$ is {undefined} for some element $x$, we write $f(x) = \bl$. Given a partial $\mathcal{H}$-coloring $f$ of $G$ and $v \in V(G)$, we let
    \[
        L_f(v) \,\defeq\, \set{\alpha \in L(v) \,:\, N(\alpha) \cap \im(f) = \0}.
    \]
    Also, for each $\alpha \in V(H)$, we let
    \[
        \deg_f(\alpha) \,\defeq\, |\set{\beta \in N(\alpha) \,:\, f(v_\beta) = \bl \text{ and } \beta \in L_f(v_\beta)}|.
    \]
    \ep{Recall that $v_\beta \in V(G)$ here is the underlying vertex of the color $\beta$.}
    
    \subsection{A coupon-collector lemma}\label{sec:coupon}
    
    In this subsection, we establish a version of the coupon-collector theorem that slightly generalizes \cite[Lemma 7]{Molloy} by Molloy. Our argument closely follows Molloy's proof.
    
    \begin{lemma}[Coupon-collector]\label{lemma:coupon}
        Let $L_0$, $L_1$, \ldots, $L_k$ be finite sets, where $k \leq \Delta$ and $|L_0| = q$ \ep{there are no assumptions on $|L_i|$ for $i \in [k]$}. For each $i \in [k]$, let $M_i$ be a matching between $L_0$ and $L_i$. 
        For every $i \in [k]$, pick an element $f(i)$ uniformly at random from $L_i \cup \set{\bl}$, making the choices for different $i$ independently. This defines a random partial function on the set $[k]$. Let
        \[L_0' \,\defeq\, \set{\alpha \in L_0 \,:\, \text{$\alpha$ is not matched to any $f(i)$}},\]
        and, for each $\alpha \in L_0$, let
        \[
            \deg'(\alpha) \defeq |\set{i \in [k] \,:\, \text{$f(i) = \bl$ and $\alpha$ is matched to some $\beta \in L_i$}}|.
        \]
        Then the following statements are valid:
        \begin{enumerate}[label=\ep{\normalfont\alph*}]
            \item\label{item:Expectation} $\E[|L_0'|] \geq \dfrac{q}{\exp(k/q)} \geq 2\ell$.
        
            \item\label{item:ListConcentration} $\P\left[|L'_0|<\ell\right]< \exp\left(-\frac{1}{8} q^{\eps/2}\right)$.
            
            \smallskip
            
            \item\label{item:DegreeConcentration} $\P \left[\text{$\exists \alpha \in L_0'(v)$ such that $\deg'(\alpha)> d$}\right] < \exp\left(-\frac{1}{300}q^{\eps/2}\right)$.
        \end{enumerate}
    \end{lemma}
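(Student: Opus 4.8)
The plan is to prove the three parts separately; all three rest on the formula for survival probabilities. For $\alpha \in L_0$ write $S(\alpha) \defeq \{i \in [k] : \alpha \text{ is matched in } M_i\}$ and, for $i \in S(\alpha)$, let $\beta_i(\alpha) \in L_i$ be the partner of $\alpha$ under $M_i$. Since $f(1), \dots, f(k)$ are independent and each $M_i$ is a matching, $\alpha \in L_0'$ iff $f(i) \neq \beta_i(\alpha)$ for all $i \in S(\alpha)$, so $\P[\alpha \in L_0'] = \prod_{i \in S(\alpha)} \bigl(1 - (|L_i|+1)^{-1}\bigr) = \prod_{i \in S(\alpha)} |L_i|/(|L_i|+1)$. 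For part \ep{a} I would bound $|L_i|/(|L_i|+1) = (1 + 1/|L_i|)^{-1} \geq e^{-1/|L_i|}$, sum over the $q$ colors of $L_0$, and apply Jensen's inequality to the convex function $t \mapsto e^{-t}$:
\[
\E[|L_0'|] \;=\; \sum_{\alpha \in L_0} \P[\alpha \in L_0'] \;\geq\; q \exp\!\Bigl(-\tfrac1q \sum_{\alpha \in L_0} \sum_{i \in S(\alpha)} \tfrac{1}{|L_i|}\Bigr).
\]
Reversing the order of summation gives $\sum_{\alpha}\sum_{i \in S(\alpha)} 1/|L_i| = \sum_{i \in [k]} |M_i|/|L_i| \leq k$ \ep{as $|M_i| \leq |L_i|$}, whence $\E[|L_0'|] \geq q e^{-k/q} \geq q e^{-\Delta/q} = 2\ell$.

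For part \ep{b}, the crucial point is that the events $(\{\alpha \notin L_0'\} : \alpha \in L_0)$ are negatively correlated, i.e., $\P\bigl[\bigcap_{\alpha \in I}\{\alpha \notin L_0'\}\bigr] \leq \prod_{\alpha \in I}\P[\alpha \notin L_0']$ for every $I \subseteq L_0$. Granting this, Lemma~\ref{lemma:NegativeChernoff} — taken with $X_\alpha$ the indicator of $\{\alpha \in L_0'\}$, so the $Y_\alpha = 1 - X_\alpha$ are the negatively correlated indicators of $\{\alpha \notin L_0'\}$ — applied with $t = \E[|L_0'|] - \ell$ yields
\[
\P[|L_0'| < \ell] \;<\; \exp\!\Bigl(-\frac{(\E[|L_0'|] - \ell)^2}{2\,\E[|L_0'|]}\Bigr) \;\leq\; \exp(-\ell/4),
\]
using $(u-\ell)^2/(2u) \geq \ell/4$ for all $u \geq 2\ell$ together with part \ep{a}. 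It then remains to note $\ell \geq \tfrac12 q^{\epsilon/2}$, which is equivalent to $(1-\epsilon/2)\log q \geq \Delta/q$: this holds because $q \geq (1+\epsilon)\Delta/\log \Delta$ gives $\log q \geq (1-o(1))\log \Delta$ while $\Delta/q \leq \log\Delta/(1+\epsilon)$, and $(1-\epsilon/2)(1+\epsilon) > 1$ for $\epsilon \in (0,1)$. The negative correlation itself I would prove by induction on $k$: conditioning on the unique color killed in round $k$ \ep{if any} reduces the inductive step to the elementary inequality $\prod_j(r_j + p(1-r_j)) \geq (1-pN)\prod_j r_j + p\sum_j\prod_{i\neq j}r_i$ for $r_1, \dots, r_N \in [0,1]$ and $0 \leq p \leq 1/(N+1)$, which follows since the difference, viewed as a function of $p$, vanishes along with its first derivative at $p = 0$ and is convex on $[0,1]$. \ep{Alternatively, one may invoke negative association of occupancy indicators for independent balls.}

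For part \ep{c}, a union bound gives $\P[\exists\, \alpha \in L_0': \deg'(\alpha) > d] \leq \sum_{\alpha \in L_0}\P[\alpha \in L_0'\text{ and }\deg'(\alpha) > d]$. Fix $\alpha$ and set $\mu_\alpha \defeq \sum_{i \in S(\alpha)} 1/|L_i|$. Conditioned on $\{\alpha \in L_0'\}$, the variables $f(i)$ for $i \in S(\alpha)$ are independent and uniform on $(L_i \cup \{\bl\}) \setminus \{\beta_i(\alpha)\}$, so $\deg'(\alpha) = |\{i \in S(\alpha): f(i) = \bl\}|$ is a sum of independent $\{0,1\}$ variables of total mean $\mu_\alpha$. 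If $\mu_\alpha \leq d/2$, then $d \geq 2\mu_\alpha$ and the Chernoff bound \ep{Lemma~\ref{lemma:Chernoff}} gives $\P[\deg'(\alpha) > d \mid \alpha \in L_0'] \leq \exp(-3d/16)$; if $\mu_\alpha > d/2$, then, using $|L_i|+1 \leq 2|L_i|$,
\[
\P[\alpha \in L_0'] \;=\; \prod_{i \in S(\alpha)}\frac{|L_i|}{|L_i|+1} \;\leq\; \exp\!\Bigl(-\!\!\sum_{i \in S(\alpha)}\!\tfrac{1}{|L_i|+1}\Bigr) \;\leq\; \exp(-\mu_\alpha/2) \;<\; \exp(-d/4).
\]
Either way $\P[\alpha \in L_0'\text{ and }\deg'(\alpha) > d] \leq \exp(-3d/16)$, so the union bound is at most $q\exp(-3d/16)$; since $d = \ell/25 \geq \tfrac1{50}q^{\epsilon/2}$ by part \ep{b}, this is below $\exp\!\bigl(-\tfrac1{300}q^{\epsilon/2}\bigr)$ once $\Delta$ is large. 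The one genuinely nontrivial ingredient is the negative correlation in part \ep{b}; everything else is a careful but routine combination of Jensen's inequality, the two Chernoff bounds recalled above, and the inequalities among $\ell$, $d$, and $q^{\epsilon/2}$ forced by $q \geq (1+\epsilon)\Delta/\log\Delta$.
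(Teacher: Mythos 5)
Your proposal is correct, and for parts (a) and (c) it is essentially the paper's own argument: part (a) is the same combination of $1-1/(x+1)\geq e^{-1/x}$, Jensen, and the bound $\sum_i |M_i|/|L_i|\leq k$; part (c) is the same dichotomy on whether $\mu_\alpha=\rho(\alpha)$ exceeds $d/2$, with the same two estimates $\exp(-d/4)$ and $\exp(-3d/16)$ and the same union bound over the $q$ colors (your habit of conditioning on $\{\alpha\in L_0'\}$ before applying the Chernoff bound, rather than bounding $\P[\deg'(\alpha)>d]$ unconditionally and taking a minimum as the paper does, is an immaterial variation, and is legitimate since the conditional law of the $f(i)$, $i\in S(\alpha)$, is still a product of uniforms on the reduced sets). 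The one place where you genuinely diverge is the proof of the negative-correlation claim underlying part (b). The paper conditions on the event $\{\alpha'\in L_0'\}$, observes that this amounts to resampling each $f(i)$ from $(L_i\setminus L_{i,\alpha'})\cup\{\bl\}$, and argues monotonically that deleting $\alpha'$'s partners can only make it more likely that every $\alpha\in I$ is hit; iterating gives the claim. You instead induct on the number of rounds $k$, split according to which color of $I$ (if any) is killed by $f(k)$ --- valid because $M_k$ is a matching, so these killing events are disjoint, each of probability $p=1/(|L_k|+1)$ --- and reduce to the polynomial inequality $\prod_j(r_j+p(1-r_j))\geq(1-pN)\prod_j r_j+p\sum_j\prod_{i\neq j}r_i$, which your convexity argument does establish (the difference vanishes to first order at $p=0$ and has nonnegative second derivative in $p$, so in fact the restriction $p\leq 1/(N+1)$ is not even needed). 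Both routes are sound; the paper's coupling/monotonicity argument is shorter and arguably more conceptual, while yours is more computational but entirely self-contained and makes the quantitative mechanism explicit. Everything else --- the choice $t=\E[|L_0'|]-\ell$ versus the paper's $t=\E[|L_0'|]/2$ in the Panconesi--Srinivasan bound, and your derivation of $\ell\geq\tfrac12 q^{\epsilon/2}$ from $q\geq(1+\epsilon)\Delta/\log\Delta$ --- lands on the same numerical bounds as the paper.
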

    \begin{proof}\stepcounter{ForClaims} \renewcommand{\theForClaims}{\ref{lemma:coupon}}
    Without loss of generality, we may assume that $L_i \neq \0$ for all $i \in [k]$. For each $\alpha \in L_0$, let $N_\alpha$ be the set of all indices $i \in [k]$ such that $\alpha$ is matched to some $\beta \in L_i$. 
    Define a quantity $\rho(\alpha)$ by
    \[
        \rho(\alpha) \,\defeq\, \sum_{i \in N_\alpha} \frac{1}{|L_i|}.
    \]
    Observe that, since each $M_i$ is a matching,
    \begin{equation}\label{eq:SumOfRho}
        \sum_{\alpha \in L_0} \rho(\alpha) \,\leq\, \sum_{i = 1}^k \sum_{\beta \in L_i} \frac{1}{|L_i|} \,=\, k.
    \end{equation}
    Notice also that, since $q \geq (1+\epsilon)\Delta/\log\Delta$, for large enough $\Delta$ we have
    \begin{equation}\label{eq:qLarge}
        \frac{q}{\exp(\Delta/q)} \,\geq\, q \,\Delta^{-\frac{1}{1+\epsilon}} \,\geq\, q^{\epsilon/2}.
    \end{equation}
    
    \smallskip
    
    \ref{item:Expectation} Using the inequality $1-1/(x+1) \geq \exp(-1/x)$ valid for all $x > 0$, we obtain 
    \begin{equation}
        \E\brac{|L'_0|} \,=\, \sum_{\alpha \in L_0} \prod_{i\in N_\alpha} \Bigparen{1-\frac{1}{|L_i| + 1}} \,
        \geq \,\sum_{\alpha \in L_0} \exp(-\rho(\alpha)). \label{eq:ExpOfL0}
    \end{equation}
    Applying Jensen's inequality to \eqref{eq:ExpOfL0} and using \eqref{eq:SumOfRho}, we get 
    \[
        \E\brac{|L'_0 |} \,\geq\, q \,\exp{\left(-\frac{k}{q}\right)}, 
    \]
    as desired. Note that, since $k \leq \Delta$, we also have $q/\exp(k/q) \geq q/\exp(\Delta/q) = 2\ell$.
    
    \smallskip
    
    \ref{item:ListConcentration}
    For $\alpha \in L_0$, let $X_{\alpha}$ be the indicator random variable of the event $\set{\alpha\in L'_0}$ and let $Y_\alpha \defeq 1 - X_\alpha$. 
    We claim that the random variables $(Y_{\alpha}\,:\, \alpha \in L_0)$ are negatively correlated:
    
    \begin{smallclaim}\label{claim:negative}
    For any $I\subseteq L_0$, $\P\brac{\bigcap_{\alpha \in I} \set{Y_\alpha = 1}} \leq \prod_{\alpha \in I} \P[Y_\alpha = 1]$.
    \end{smallclaim}
    \begin{claimproof}[Proof of Claim~\ref{claim:negative}]
    We first notice that for any $I\subseteq L_0$ and $\alpha' \in L_0 \setminus I$,
    \begin{equation}\label{eq:IndStep}
        \P\brac{\bigcap_{\alpha\in I} \set{Y_{\alpha} = 1} \,\middle\vert\, X_{\alpha'} = 1} \,\ge\, \P\brac{\bigcap_{\alpha\in I} \set{Y_{\alpha} = 1}}.
    \end{equation}
    To see this, for each $i \in [k]$ and $\alpha \in L_0$, let $L_{i, \alpha}$ be the set of all elements $\beta \in L_i$ such that $\alpha\beta \in M_i$ \ep{so $L_{i,\alpha}$ contains at most one element}. 
    To sample $f$ conditioned on the event $X_{\alpha'} = 1$, we pick each $f(i)$ uniformly at random from $(L_i \setminus L_{i, \alpha'}) \cup \set{\bl}$. As $L_{i,\alpha'} \cap L_{i,\alpha} = \0$ for all $\alpha \in I$ and $i \in [k]$, the removal of $L_{i,\alpha'}$ from $L_i$ does not decrease the probability that for each $\alpha \in I$, there is $i \in [k]$ with $f(i) \in L_{i,\alpha}$, so \eqref{eq:IndStep} holds.  
    Now, expanding the left hand side of \eqref{eq:IndStep}, we see that it is equivalent to 
    \begin{align*}
        1- \P\brac{X_{\alpha'} = 1 \, \middle\vert\, \bigcap_{\alpha \in I} \set{Y_{\alpha} = 1} } \leq 1- \P\brac{X_{\alpha'} = 1}
    \end{align*}
    Since $\set{X_{\alpha'} = 1}$ and $\set{Y_{\alpha'} = 1}$ are complementary events, we see that \eqref{eq:IndStep} is equivalent to
    \begin{equation}\label{eq:IndStep1}
        \P\brac{Y_{\alpha'} = 1 \,\middle\vert\, \bigcap_{\alpha\in I} \set{Y_{\alpha} = 1}} \,\le\, \P\brac{ Y_{\alpha'} = 1}
    \end{equation}
    Applying \eqref{eq:IndStep1} inductively establishes the claim.
    \end{claimproof}
    
    Recalling that $\E[|L_0'|] \geq 2\ell = q /\exp(\Delta/q)$, using Lemma~\ref{lemma:NegativeChernoff}, and invoking \eqref{eq:qLarge}, we obtain 
    \[
        \P\brac{|L_0'|<\ell} \,\leq\, \P\brac{|L_0'| < \frac{1}{2}\E[|L_0'|]} \,<\, \exp{\Bigparen{-\frac{\E\brac{|L_0'|}}{8}}}\,\le\, \exp\Bigparen{-\frac{q^{\epsilon/2}}{8}}.
    \]
    
    \smallskip
    
    \ref{item:DegreeConcentration} Consider any $\alpha \in L_0$. We will bound $\P[\alpha \in L_0']$ by considering two cases depending on whether $\rho(\alpha) \ge d/2$ or $\rho(\alpha) < d/2$. If $\rho(\alpha) \geq d/2$, then, using the inequality $1 - 1/(x+1) \leq \exp(-1/(2x))$ valid for all $x \geq 1$, we can write
    \begin{equation}\label{eq:first_bound}
        \P[\alpha \in L_0'] \,=\, \prod_{i \in N_\alpha} \left(1 - \frac{1}{|L_i| + 1}\right) \,\leq\, \exp\left(-\frac{\rho(\alpha)}{2}\right) \,\leq\, \exp\left(- \frac{d}{4}\right) \,\leq\, \exp\left(-\frac{q^{\epsilon/2}}{200}\right).
    \end{equation}
    If, on the other hand, $\rho(\alpha) < d/2$, then
    \[\E\brac{\deg'(\alpha)} \,=\, \sum_{i\in N_{\alpha}} \frac{1}{|L_i| + 1} \,\leq\, \rho(\alpha) \,<\, \frac{d}{2}.\]
    Since the values $f(i)$ for distinct $i$ are chosen independently, we may apply Lemma~\ref{lemma:Chernoff} to get
    \[
        \P\brac{\deg'(\alpha) \geq(1+s)\E[\deg'(\alpha)]} \,\leq\, \exp\left(-\frac{s^2\E[\deg'(\alpha)]}{2(1+s/3)}\right),
    \]
    for any $s > 0$. We may assume $\E[\deg'(\alpha)] > 0$ \ep{otherwise $\deg'(\alpha) = 0$ with probability $1$} and plug in the value $s = d/(2\E[\deg'(\alpha)])$, which yields
    \begin{equation}\label{eq:color_degree}
        \P{\bigbrac{\deg'(\alpha) > d}} \,\leq\, \P{\bigbrac{\deg'(\alpha) \geq \E\brac{\deg'(\alpha)} + d/2}} \,\le\, \exp\Bigparen{-\frac{3d}{16}} \,\leq\, \exp {\Bigparen{-\frac{3q^{\eps/2}}{800}}}.
    \end{equation}
    Since $3/800 < 1/200$, it follows from \eqref{eq:first_bound} and \eqref{eq:color_degree} that for all $\alpha \in L_0$,
   \begin{align*}
       \P\left[\alpha \in L_0' \text{ and } \deg'(\alpha) > d\right] \,&\leq\, \min \left\{\P\left[\alpha \in L_0'\right], \, \P{\bigbrac{\deg'(\alpha) > d}} \right\} \\
       &\leq \exp\left(-\frac{3q^{\epsilon/2}}{800}\right).
   \end{align*}
    Therefore, we may conclude that
    \begin{align*}
        \P \bigbrac{\exists \alpha\in \dlist{v} \text{ such that } \deg'(\alpha)> d} &\,\le\,   
        q \,\exp\left(-\frac{3q^{\epsilon/2}}{800}\right) \,<\, \exp\Bigparen{-\frac{q^{\eps/2}}{300}},
    \end{align*}
    assuming $\Delta$ is large enough.
    \end{proof}
    
    \subsection{Counting partial colorings}{\label{sec: partial}}\label{sec:partial}
    
    Let $\pcol{G}$ denote the set of all proper partial $\mathcal{H}$-colorings of $G$. Also, for a subset $U \subseteq V(G)$, let $\pcol{U}$ be the set of all proper partial $\mathcal{H}$-colorings $f \in \pcol{G}$ with $\dom(f) \subseteq U$. 
    In this subsection we establish a lower bound on $|\pcol{G}|$. We start with a lemma: 
    
    \begin{lemma}\label{lemma:pcolInduction}
        Suppose that $U\subseteq V(G)$ and $x\in V(G)\setminus U$.  Then
        \[
            |\pcol{U + x}| \,\geq\, q \, \exp\Bigparen{-\frac{\deg_U(x)}{q}} \, |\pcol{U}|.
        \]
    \end{lemma}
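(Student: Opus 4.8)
The plan is to fix $U$ and $x$ and evaluate $|\pcol{U+x}|$ by classifying proper partial $\mathcal H$-colorings according to what they do at $x$. Any $f \in \pcol{U+x}$ is obtained from its restriction $g \defeq f|_U \in \pcol U$ either by setting $f(x) = \bl$ or by assigning $x$ a color, and the latter is legal exactly when that color lies in $L_g(x)$; since $x \notin U$, all these extensions of a given $g$ are distinct and every element of $\pcol{U+x}$ arises this way. Hence
\[
    |\pcol{U+x}| \,=\, \sum_{g \in \pcol U} \bigl(1 + |L_g(x)|\bigr),
\]
and the lemma becomes the assertion that the average of $1 + |L_g(x)|$ over $g \in \pcol U$ is at least $q\exp(-\deg_U(x)/q)$. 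Observe that the naive pointwise bound $|L_g(x)| \geq q - \deg_U(x)$ yields only $|\pcol{U+x}| \geq (q - \deg_U(x))|\pcol U|$, which falls just short of the target $q\exp(-\deg_U(x)/q)|\pcol U|$; so genuine averaging over $g$ is needed, capturing the fact that colorings in which many neighbors of $x$ happen to use colors matched into $L(x)$ must be correspondingly scarce.

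The second step is to use triangle-freeness to make the neighbors of $x$ independent of one another. Set $W \defeq N_U(x)$, $d \defeq \deg_U(x) = |W|$ and $U' \defeq U \setminus W$; since $G$ is triangle-free, $W$ is an independent set in $G$, and $U'$ contains no neighbor of $x$. Every $g \in \pcol U$ then splits uniquely as $g = g' \cup h$ with $g' \defeq g|_{U'} \in \pcol{U'}$ and $h \defeq g|_W$; because $W$ is independent, the values $h(w)$ for $w \in W$ are subject to no mutual constraints and each ranges freely over $L_{g'}(w) \cup \set{\bl}$; and because every color in $\im(g')$ lies in the list of a non-neighbor of $x$ (and a DP-cover has no $H$-edges between the lists of non-adjacent vertices), the coloring $g'$ places no restriction on $L_g(x)$ at all — only $h$ does. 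In particular, for fixed $g' \in \pcol{U'}$ the number of admissible $h$ is $\prod_{w\in W}\bigl(1 + |L_{g'}(w)|\bigr)$, so $|\pcol{U+x}| = \sum_{g'\in\pcol{U'}}\sum_{h}\bigl(1 + |L_{g'\cup h}(x)|\bigr)$.

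Finally, I would run the averaging inside each fiber. Fix $g' \in \pcol{U'}$ and pick $h$ at random, taking each $h(w)$ uniform and independent in $L_{g'}(w) \cup \set{\bl}$; then $\sum_h\bigl(1 + |L_{g'\cup h}(x)|\bigr) = \prod_{w\in W}\bigl(1+|L_{g'}(w)|\bigr)\cdot\E_h\bigl[1 + |L_{g'\cup h}(x)|\bigr]$, so it suffices to prove $\E_h[|L_{g'\cup h}(x)|] \geq q\exp(-d/q)$ for each $g'$. By linearity, $\E_h[|L_{g'\cup h}(x)|] = \sum_{\alpha\in L(x)}\P_h[\alpha\in L_{g'\cup h}(x)]$. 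By the second step, $\alpha \in L_{g'\cup h}(x)$ iff no $w \in W$ has $h(w)$ corresponding to $\alpha$; and since the $H$-edges between $L(w)$ and $L(x)$ form a matching, there is at most one color $\beta_w(\alpha) \in L(w)$ corresponding to $\alpha$, so by independence $\P_h[\alpha\in L_{g'\cup h}(x)]$ is a product over $w\in W$ of factors that equal $1$ unless $\beta_w(\alpha)$ exists and lies in $L_{g'}(w)$, in which case the factor is $1 - 1/(1+|L_{g'}(w)|) \geq \exp(-1/|L_{g'}(w)|)$ — the elementary inequality already used in the proof of Lemma~\ref{lemma:coupon}. Passing to logarithms and applying Jensen's inequality over the $q$ colors of $L(x)$ reduces the desired bound to the estimate
\[
    \sum_{\alpha \in L(x)} \ \sum_{w} \frac{1}{|L_{g'}(w)|} \,\leq\, |W| \,=\, d,
\]
where the inner sum ranges over those $w \in W$ for which $\beta_w(\alpha)$ is defined and lies in $L_{g'}(w)$; this holds because for each fixed $w$ the colors $\beta_w(\alpha)$ are pairwise distinct, so at most $|L_{g'}(w)|$ of the $\alpha$'s contribute, i.e.\ at most $1$ per $w$. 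Summing this fiberwise bound over all $g' \in \pcol{U'}$ gives $|\pcol{U+x}| \geq q\exp(-d/q)\,|\pcol U|$, as required. The substantive point is exactly this last step: triangle-freeness is what makes the choices $h(w)$ independent so that the per-color survival probabilities multiply, and the injectivity of the matchings between $L(w)$ and $L(x)$ is what keeps the Jensen exponent down to $d/q$; without the triangle-free hypothesis the decoupling fails and the argument collapses, while everything else is routine bookkeeping.
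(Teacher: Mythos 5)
Your proof is correct and follows essentially the same route as the paper: the identity $|\pcol{U+x}| = \sum_{g}(1+|L_g(x)|)$, the decomposition of each $g$ into $g'$ on $U\setminus N_U(x)$ plus an unconstrained (by triangle-freeness) assignment on $N_U(x)$, and the fiberwise averaging. The only difference is that the paper delegates the bound $\E_h[|L_{g'\cup h}(x)|] \geq q\exp(-\deg_U(x)/q)$ to Lemma~\ref{lemma:coupon}\ref{item:Expectation}, whereas you re-derive it inline via the same product formula, elementary inequality, and Jensen argument.
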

        \begin{proof}
            To begin with, observe that
        \begin{equation}\label{eq:addone}
            |\pcol{U + x}| \,=\, \sum_{f \in \pcol{U}} (|L_f(x)| + 1) \,\geq\, \sum_{f \in \pcol{U}} |L_f(x)|,
        \end{equation}
        since given a partial coloring $f \colon U \pto V(H)$, we can extend it to $U + x$ by assigning to $x$ an arbitrary color from $L_f(x) \cup \set{\bl}$. To get a lower bound on the right-hand side of \eqref{eq:addone}, we shall use Lemma~\ref{lemma:coupon}. For 
        a proper partial $\mathcal{H}$-coloring $g \colon U \setminus N_U(x) \pto V(H)$, let $\Ext_U(g)$ denote the set of all \emph{extensions} of $g$ to $U$, i.e., all proper partial $\mathcal{H}$-colorings $f \colon U \pto V(H)$ that agree with $g$ on $U \setminus N_U(x)$.
        Since $G$ is triangle-free, a coloring $f \in \Ext_U(g)$ is obtained by assigning to each $y \in N_U(x)$ an arbitrary color from $L_g(y) \cup \set{\bl}$. Therefore, we may apply Lemma~\ref{lemma:coupon}\ref{item:Expectation} with $k = \deg_U(x)$ and the sets $L(x)$ and $(L_g(y) \,:\, y \in N_U(x))$ playing the role of $L_0$, $L_1$, \ldots, $L_k$ to conclude that
        \[
            \frac{\sum_{f \in \Ext_U(g)} |L_f(x)|}{|\Ext_U(g)|} \,\geq\, q\,\exp\left(-\frac{\deg_U(x)}{q}\right).
        \]
        Now we can write
        %
        \begin{align*}
            \sum_{f \in \pcol{U}} |L_f(x)|
            \,&=\, \sum_{g \in \pcol{U\setminus N_U(x)}} \ \sum_{f\in \Ext_U(g)}\bigabs{L_{f}(x)}\\
            &=\, \sum_{g \in \pcol{U\setminus N_U(x)}} \ |\Ext_U(g)| \cdot \frac{\sum_{f \in \Ext_U(g)} |L_f(x)|}{|\Ext_U(g)|} \\
            &\geq\, q\,\exp\left(-\frac{\deg_U(x)}{q}\right) \ \sum_{g \in \pcol{U\setminus N_U(x)}} \ |\Ext_U(g)|\\
            &=\, q\,\exp\left(-\frac{\deg_U(x)}{q}\right) \, |\pcol{U}|.
        \end{align*}
        Combining this with \eqref{eq:addone} yields the desired result.
        \end{proof}
        
        \begin{corl}[Counting partial colorings]\label{corl:partial}
            We have
            \[|\pcol{G}| \,\geq\,  \left(1 - \frac{1}{q}\right)^m \, q^n.\]
            \end{corl}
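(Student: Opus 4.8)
The plan is to apply Lemma~\ref{lemma:pcolInduction} repeatedly along an arbitrary vertex ordering and then recognize the resulting exponent as the edge count. Fix an enumeration $V(G) = \set{v_1, \ldots, v_n}$ and set $U_0 \defeq \0$ and $U_i \defeq \set{v_1, \ldots, v_i}$ for $i \in [n]$. The only proper partial $\mathcal{H}$-coloring with empty domain is the empty map, so $|\pcol{U_0}| = 1$. Applying Lemma~\ref{lemma:pcolInduction} with $U = U_{i-1}$ and $x = v_i$ for each $i$ in turn, we obtain
\[
    |\pcol{G}| \,=\, |\pcol{U_n}| \,\geq\, \prod_{i=1}^n q \, \exp\Bigparen{-\frac{\deg_{U_{i-1}}(v_i)}{q}} \,=\, q^n \, \exp\Bigparen{-\frac{1}{q}\sum_{i=1}^n \deg_{U_{i-1}}(v_i)}.
\]

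Next I would observe that $\sum_{i=1}^n \deg_{U_{i-1}}(v_i) = m$: the quantity $\deg_{U_{i-1}}(v_i)$ counts the edges joining $v_i$ to vertices preceding it in the ordering, so summing over $i$ counts every edge of $G$ exactly once (namely, at the moment its later endpoint is processed). This turns the bound into $|\pcol{G}| \geq q^n \exp(-m/q)$.

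Finally, using the elementary inequality $1 - 1/q \leq \exp(-1/q)$, we have $(1 - 1/q)^m \leq \exp(-m/q)$, and therefore
\[
    |\pcol{G}| \,\geq\, q^n \exp\Bigparen{-\frac{m}{q}} \,\geq\, \left(1 - \frac{1}{q}\right)^m q^n,
\]
as desired. There is no real obstacle here; the argument is a routine telescoping of Lemma~\ref{lemma:pcolInduction}. The only point requiring a moment's care is the handshake-style bookkeeping that identifies the accumulated exponent with $m$, which is where the independence of the ordering of the final bound becomes transparent.
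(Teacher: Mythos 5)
Your argument is correct and is essentially identical to the paper's proof: the same telescoping application of Lemma~\ref{lemma:pcolInduction} along an arbitrary vertex ordering, the same handshake identification of $\sum_i \deg_{U_{i-1}}(v_i)$ with $m$, and the same final inequality $1 - 1/q \leq \exp(-1/q)$. No changes needed.
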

        \begin{proof}
            Let $x_1$, \ldots, $x_n$ be an arbitrary ordering of the vertices of $G$. 
            Since, by definition, $|\pcol{\0}| = 1$, repeated applications of Lemma~\ref{lemma:pcolInduction} yield
            \begin{align*}
                \bigabs{\pcol{G}} \,&\geq\, q^n \ \prod_{k=1}^{n} \exp\Bigparen{-\frac{\deg_{\set{x_1,\ldots,x_{k-1}}}(x_k)}{q}} \\
                &=\, q^n \, \exp\left(- \frac{1}{q} \sum_{k = 1}^n \deg_{\set{x_1,\ldots,x_{k-1}}}(x_k) \right)\,=\,q^n \,\exp\Bigparen{-\frac{m}{q}} \,\geq\, \left(1 - \frac{1}{q}\right)^m \, q^n. \qedhere
            \end{align*}
        \end{proof}
        
        As mentioned in the introduction, Corollary~\ref{corl:partial} can already be used to derive the lower bound on the number of independent sets in $G$ given by Corollary~\ref{corl:indep}.
    
    \subsection{Counting good partial colorings}{\label{sec: good_partial}}\label{sec:Rosenfeld}
    
    Let $f \in \pcol{G}$ be a proper partial $\mathcal{H}$-coloring of $G$. We say that $f$ has a \emphd{flaw} at a vertex $v \in V(G)$ if $f(v) = \bl$ and at least one of the following holds:
    \begin{itemize}
        \item $|L_f(v)| < \ell$, or
        \item $\deg_f(\alpha) > d$ for some $\alpha \in L_f(v)$.
    \end{itemize}
    Let $\mathsf{Flaw}(f)$ be the set of all vertices $v \in V(G)$ such that $f$ has a flaw at $v$. If $\mathsf{Flaw}(f) = \0$, we say that $f$ is \emphd{good}. The set of all good partial $\mathcal{H}$-colorings of $G$ is denoted by $\gcol{G}$. Our goal in this subsection is to establish a lower bound on $|\gcol{G}|$. 
    
    Given a subset $U \subseteq V(G)$, we say that a partial $\mathcal{H}$-coloring $f \in \pcol{G}$ is \emphd{good on $U$} if $v \not \in \mathsf{Flaw}(f)$ for every vertex $v$ such that $N^2[v] \subseteq U$. Let $\gcol{U}$ denote the set of all $f \in \pcol{G}$ that are good on $U$. We emphasize that a coloring $f \in \gcol{U}$ is not required to belong to $\pcol{U}$, i.e., the domain of $f$ may not be a subset of $U$. However, whether or not $f$ is good on $U$ only depends on the restriction of $f$ to $U$ \ep{because whether or not $f$ has a flaw at $v$ is determined by the restriction of $f$ to $N^2[v]$}. Since every proper partial $\mathcal{H}$-coloring is vacuously good on the empty set, we have \[\gcol{\0} \,=\, \pcol{G}.\]
            
    \begin{lemma}\label{lemma:good}
        Suppose that $U\subseteq V(G)$ and $x\in V(G)\setminus U$.  Then
        \begin{align}\label{eq:indGood}
           |\gcol{U + x}| \,\geq\, \Bigparen{1-\exp\left(-\frac{q^{\eps/2}}{600}\right)} \, |\gcol{U}|.
        \end{align}
    \end{lemma}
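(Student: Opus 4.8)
The plan is to prove \eqref{eq:indGood} by a double counting / probabilistic argument comparing the colorings in $\gcol{U}$ with those in $\gcol{U+x}$, using Rosenfeld's inductive counting idea together with Lemma~\ref{lemma:coupon}. First I would observe that $\gcol{U+x} \subseteq \gcol{U}$, since being good on a larger set is a stronger condition; so the content of the lemma is that the fraction of colorings in $\gcol{U}$ that fail to be good on $U+x$ is at most $\exp(-q^{\eps/2}/600)$. A coloring $f \in \gcol{U}$ lies in $\gcol{U} \setminus \gcol{U+x}$ precisely when there is some vertex $v$ with $N^2[v] \subseteq U+x$ but $N^2[v] \not\subseteq U$ (so $x \in N^2[v]$) at which $f$ has a flaw. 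The key point is that whether such a flaw exists is determined by the restriction of $f$ to $N^2[v]$, and $N^2[v] \subseteq U+x$; moreover since $x \in N^2[v]$, the vertex $v$ lies in $N^2[x]$, so there are at most $\Delta^2$ candidate vertices $v$ to worry about. This is where triangle-freeness and the $\Delta$-bound enter.

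Next I would set up the comparison more carefully. Partition $U+x$ as $(U+x) = W \sqcup (N^2[x] \cap (U+x))$ where... actually the cleaner route is: group the colorings of $\gcol{U}$ according to their restriction to $V(G) \setminus N[x]$ (or some appropriate "frozen" part), and show that within each group the desired fraction bound holds. Concretely, fix the colors of all vertices except those in $N(x) \cup \{x\}$ — wait, the flaw at $v$ depends on $N^2[v]$ which can reach beyond $N[x]$. The honest approach, following the Wanless--Wood framework, is to fix the restriction of $f$ to $V(G) \setminus \{x\}$... but then $f(x)$ is the only free coordinate and that is too weak. So instead one fixes $f$ on a set whose complement is large enough to "resample," and uses Lemma~\ref{lemma:coupon} to control the bad events. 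I would fix the restriction of $f$ to $V(G) \setminus (N(x) \cup \{x\})$ — no: the subtlety is that the relevant comparison is between $\gcol{U}$ and $\gcol{U+x}$, and $N^2[v] \subseteq U+x$ for the new vertices $v$ means $v \in N^2[x]$, and $N^2[v] \subseteq N^4[x]$. So I would condition on the restriction of $f$ to $V(G) \setminus N(x)$, leaving the colors on $N(x)$ free; this is a product of uniform choices from $L_f(y) \cup \{\bl\}$ for $y \in N(x)$ (using triangle-freeness so that $N(x)$ is independent), exactly the setup of Lemma~\ref{lemma:coupon}.

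Then the heart of the argument is a union bound over the at-most-$\Delta^2$ vertices $v \in N^2[x]$ with $N^2[v] \subseteq U+x$. For each such $v$, I want to bound the conditional probability (over the free choices on $N(x)$) that $f$ has a flaw at $v$. If $v \notin N[x]$, then $f(v)$ and everything in $N(v)$ that matters may be partly fixed and partly free; the flaw condition $|L_f(v)| < \ell$ or $\exists \alpha \in L_f(v): \deg_f(\alpha) > d$ is exactly what Lemma~\ref{lemma:coupon}\ref{item:ListConcentration} and \ref{item:DegreeConcentration} control, giving probability $< \exp(-q^{\eps/2}/8) + \exp(-q^{\eps/2}/300)$. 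Actually I need to be careful that the colors relevant to $v$'s flaw that are being "resampled" are exactly those on $N(v) \cap N(x)$, and the matchings $M_i$ come from the DP-cover edges; Lemma~\ref{lemma:coupon} is stated exactly to accommodate this. Summing over $\le \Delta^2$ vertices and absorbing the polynomial factor $\Delta^2$ into the exponential (since $q^{\eps/2} \geq \Delta^{\eps^2/(2(1+\eps))} \cdot (\text{stuff})$ grows polynomially in $\Delta$ and we only need $\Delta$ large), we get total failure probability $< \exp(-q^{\eps/2}/600)$, as the constant $1/600$ is chosen with exactly this polynomial slack in mind. I expect the main obstacle to be the bookkeeping: correctly identifying which coordinates of $f$ are being resampled, verifying that conditioning on $f$ restricted to $V(G)\setminus N(x)$ leaves precisely the independent-set structure that Lemma~\ref{lemma:coupon} requires, and checking that the case $v \in N(x)$ (where $f(v)$ itself may be resampled) either cannot produce a flaw relevant to the $U \to U+x$ step or is handled by the same estimate — i.e., making the reduction to the coupon-collector lemma airtight rather than the numerical constants, which are generous by design.
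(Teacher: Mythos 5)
Your overall plan --- bound $|\gcol{U}\setminus\gcol{U+x}|$ by a union bound over the at most $\Delta^2+1$ vertices $u\in N^2[x]$ that could newly acquire the property $N^2[u]\subseteq U+x$, and control the probability of a flaw at each such $u$ via Lemma~\ref{lemma:coupon} --- matches the paper's. But there are two genuine gaps in your reduction to the coupon-collector lemma.

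First, you propose to freeze $f$ on $V(G)\setminus N(x)$ and resample on $N(x)$. The flaw at a vertex $u\in N^2[x]$ with $u\neq x$ is governed by the colors on $N(u)$, and Lemma~\ref{lemma:coupon} requires \emph{all} of the lists $L_1,\ldots,L_k$ attached to $L_0=L(u)$ to be resampled independently; it gives no control if only the coordinates in $N(u)\cap N(x)$ are free and the rest are adversarially fixed (the frozen colors alone can already force $|L_f(u)|<\ell$ with conditional probability $1$). You notice this issue (``the colors being resampled are exactly those on $N(v)\cap N(x)$'') but do not resolve it. The paper resolves it by resampling $N(u)$ separately for each candidate flawed vertex $u$: it sets $\mathcal{F}_u$ to be the colorings good on $U$ with a flaw at $u$, and bounds $|\mathcal{F}_u|$ by considering colorings $g$ of $V(G)\setminus N(u)$ that are good on $U\setminus N(u)$ together with all their extensions to $N(u)$.

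Second, and relatedly, conditioning on membership in $\gcol{U}$ destroys the product structure: the fiber of $\gcol{U}$ over a fixed restriction to $V(G)\setminus N(x)$ is not the full set of independent uniform choices on $N(x)$, because goodness on $U$ itself constrains those colors, so the hypotheses of Lemma~\ref{lemma:coupon} are not met. The paper avoids this by weakening ``good on $U$'' to ``good on $U\setminus N(u)$'' --- a condition that does not depend on the colors of $N(u)$ --- which yields $|\mathcal{F}_u|\le p\,|\gcol{U\setminus N(u)}|$ with $p=\exp(-q^{\eps/2}/400)$. This leaves a denominator mismatch: the bound is relative to $|\gcol{U\setminus N(u)}|$, not $|\gcol{U}|$, and closing it is exactly where Rosenfeld's method enters. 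The lemma is proved by induction on $|U|$, and $\Delta$ applications of the inductive hypothesis give $|\gcol{U}|\ge(1-\eta)^{\Delta}|\gcol{U\setminus N(u)}|$ with $\eta=\exp(-q^{\eps/2}/600)$. Your proposal contains no induction at all, and without it there is no way to convert the per-vertex bounds into the claimed multiplicative loss. The slack between the constants $600$ and $400$ is what absorbs both the factor $\Delta^2+1$ from the union bound and the factor $(1-\eta)^{-\Delta}$ from the induction.
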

    \begin{proof}\stepcounter{ForClaims} \renewcommand{\theForClaims}{\ref{lemma:good}}
        This is an inductive argument in the style of Rosenfeld \cite{Rosenfeld}. Note, however, that our application of Rosenfeld's method is somewhat different from the ones in \cite{Rosenfeld,WW}. Namely, we do not show that $|\gcol{U + x}|$ \emph{grows} by a certain factor compared to $|\gcol{U}|$, but rather that it \emph{does not shrink} too much. This difference appears crucial for our approach. We remark that in \cite{Pirot}, Hurley and Pirot prove Molloy's bound $\chi(G) \leq (1 + o(1))\Delta/\log \Delta$ using a more ``standard'' version of Rosenfeld's technique \ep{their argument does not refer to good partial colorings at all}. The Hurley--Pirot approach was further simplified by Martinsson in \cite{Martinsson}.
        
        We proceed by induction on $|U|$. So, fix $U\subseteq V(G)$ and suppose that \eqref{eq:indGood} holds when $U$ is replaced by any set of strictly smaller cardinality. Let $\mathcal{F}$ be the set of all $f \in \pcol{G}$ such that $f$ is good on $U$ but not on $U + x$. Then
        \[
            |\gcol{U + x}| \,=\, |\gcol{U}| - |\mathcal{F}|.
        \]
        For each $u \in V(G)$, define $\mathcal{F}_u \defeq \set{f \in \mathcal{F} \,:\, \text{$f$ has a flaw at $u$}}$. If $f \in \mathcal{F}$, then there must be a vertex $u \in \mathsf{Flaw}(f)$ such that $N^2[u] \subseteq U + x$. Since $N^2[u] \not \subseteq U$, this implies that $u \in N^2[x]$, and hence 
        \[
            |\mathcal{F}| \, \leq\, \sum_{u \in N^2[x]} |\mathcal{F}_u|.
        \]
        We will give an upper bound for $|\mathcal{F}_u|$ for each $u \in N^2[x]$. 
        
        \begin{smallclaim}\label{claim:Fu}
            Set $\eta \defeq \exp\left(-q^{\eps/2}/600\right)$ and $p \defeq \exp\left(-q^{\eps/2}/400\right)$. Then, for every $u \in N^2[x]$,
            \[
                |\mathcal{F}_u| \,\leq\, \frac{p |\gcol{U}|}{(1 - \eta)^\Delta}.
            \]
        \end{smallclaim}
        \begin{claimproof}[Proof of Claim~\ref{claim:Fu}]
            Let $\mathcal{S}$ be the set of all proper partial $\mathcal{H}$-colorings $g \colon V(G) \setminus N(u) \pto V(H)$ that are good on $U\setminus N(u)$ such that $g(u) = \bl$. For each $g \in \mathcal{S}$, let $\Ext(g)$ be the set of all \emph{extensions} of $g$, i.e., all proper partial $\mathcal{H}$-colorings of $G$ that agree with $g$ on $V(G) \setminus N(u)$. Also, let $\mathsf{FlawedExt}(g)$ be the set of all $f \in \Ext(g)$ that have a flaw at $u$. Since $G$ is triangle-free, a coloring $f \in \Ext(g)$ is obtained by assigning to each vertex $y \in N(u)$ an arbitrary color from $L_g(y) \cup \set{\bl}$. Thus, we may use parts \ref{item:ListConcentration} and \ref{item:DegreeConcentration} of Lemma~\ref{lemma:coupon} with $k = \deg(u)$ and the sets $L(u)$ and $(L_g(y) \,:\, y \in N(u))$ playing the role of $L_0$, $L_1$, \ldots, $L_k$ to conclude that 
            \[
                \frac{|\mathsf{FlawedExt}(g)|}{|\Ext(g)|} \,\leq\, \exp\left(-\frac{q^{\epsilon/2}}{8}\right) \,+\, \exp\left(-\frac{q^{\epsilon/2}}{300}\right) \,\leq\, p.
            \]
            Note that if $g \in \mathcal{S}$ and $f \in \Ext(g)$, then $f$ is good on $U \setminus N(u)$. Therefore,
            \begin{equation}\label{eq:p}
                |\mathcal{F}_u| \,\leq\, \sum_{g \in \mathcal{S}} |\mathsf{FlawedExt}(g)| \,\leq\, p \, \sum_{g \in \mathcal{S}} |\Ext(g)| \,\leq\, p\, |\gcol{U\setminus N(u)}|.
            \end{equation}
            Repeated applications of the inductive hypothesis show that
            \[
                |\gcol{U}| \,\geq\, (1-\eta)^\Delta \, |\gcol{U \setminus N(u)}|.
            \]
            Together with \eqref{eq:p}, this yields the desired bound on $|\mathcal{F}_u|$.
        \end{claimproof}
        
        Putting the above bounds together, we see that
        \begin{align*}
            |\gcol{U+x}| \,&\geq\, \bigabs{\gcol{U}} \,-\, \sum_{u\in N^2[x]} \bigabs{\mathcal{F}_u}\\
            &\ge\, \bigabs{\gcol{U}} \,-\, (\Delta^2+1) \cdot \frac{p}{(1 - \eta)^\Delta} \cdot |\gcol{U}|\\
            &=\, \Bigparen{1- \frac{p(\Delta^2+1)}{(1-\eta)^{\Delta}}} \, \bigabs{\gcol{U}} \\
            &\geq \left(1 - \eta\right) \bigabs{\gcol{U}},\phantom{\Bigparen{1- \frac{p(\Delta^2+1)}{(1-\eta)^{\Delta}}}}
        \end{align*}
        where the last inequality holds for $\Delta$ large enough.
        \end{proof}
        
        Keeping every vertex blank provides an example of a proper partial $\mathcal{H}$-coloring of $G$, so $\gcol{\0} = \pcol{G} \neq \0$. Therefore, applying Lemma~\ref{lemma:good} repeatedly gives
        \[
            |\gcol{G}| \,\geq\, \Bigparen{1-\exp\left(-\frac{q^{\eps/2}}{600}\right)}^n \,>\,0,
        \]
        which means that there must exist at least one good coloring of $G$. As mentioned in \S\ref{subsec:ProofOverview}, an application of the Lov\'asz Local Lemma shows that every good partial coloring can be extended to a proper coloring of the entire graph $G$, and thus $G$ is $\mathcal{H}$-colorable. Using Corollary~\ref{corl:partial} yields a better lower bound on $|\gcol{G}|$:
        
        \begin{corl}[Counting good partial colorings]\label{corl:good}
        We have
        \[
            \bigabs{\gcol{G}} \,\geq\,  \Bigparen{1-\exp\left(-\frac{q^{\eps/2}}{600}\right)}^n \, \Bigparen{1-\frac{1}{q}}^m \, q^n.
        \]
        \end{corl}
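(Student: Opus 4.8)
The plan is to obtain the bound by iterating Lemma~\ref{lemma:good} along an arbitrary ordering of the vertices and then feeding in the lower bound on $|\pcol{G}|$ from Corollary~\ref{corl:partial}. Concretely, fix an ordering $x_1, \ldots, x_n$ of $V(G)$ and set $U_k \defeq \set{x_1, \ldots, x_k}$, so that $U_0 = \0$ and $U_n = V(G)$. The base case is the identity $\gcol{\0} = \pcol{G}$, which was already observed just before the statement of Lemma~\ref{lemma:good} (every proper partial $\mathcal{H}$-coloring is vacuously good on the empty set). At the top of the chain, note that being good on $V(G)$ coincides with being good outright, since $N^2[v] \subseteq V(G)$ holds trivially for every $v$; hence $\gcol{V(G)} = \gcol{G}$.

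The main computation is then just a telescoping product. Applying Lemma~\ref{lemma:good} with $U = U_{k-1}$ and $x = x_k$ for $k = 1, \ldots, n$ gives
\[
    |\gcol{U_k}| \,\geq\, \Bigparen{1 - \exp\left(-\frac{q^{\eps/2}}{600}\right)} |\gcol{U_{k-1}}|,
\]
and chaining these $n$ inequalities yields
\[
    |\gcol{G}| \,=\, |\gcol{V(G)}| \,\geq\, \Bigparen{1 - \exp\left(-\frac{q^{\eps/2}}{600}\right)}^n |\gcol{\0}| \,=\, \Bigparen{1 - \exp\left(-\frac{q^{\eps/2}}{600}\right)}^n |\pcol{G}|.
\]
Finally, substituting the bound $|\pcol{G}| \geq (1 - 1/q)^m q^n$ from Corollary~\ref{corl:partial} produces exactly the claimed inequality.

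I do not expect any genuine obstacle here: all the substantive work—the coupon-collector concentration estimates and the Rosenfeld-style induction—has already been carried out in Lemma~\ref{lemma:coupon}, Lemma~\ref{lemma:good}, and Corollary~\ref{corl:partial}. The only points that require a moment's care are the bookkeeping identifications $\gcol{\0} = \pcol{G}$ and $\gcol{V(G)} = \gcol{G}$, and the observation that the per-step loss factor in Lemma~\ref{lemma:good} is uniform (independent of $U$ and $x$), so that the $n$-fold product is clean. Thus the proof is essentially a one-line assembly of the preceding results.
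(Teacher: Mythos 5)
Your argument is exactly the paper's: the authors' proof of Corollary~\ref{corl:good} is the one-line ``Use Corollary~\ref{corl:partial} and apply Lemma~\ref{lemma:good} $n$ times,'' and you have simply spelled out that telescoping, including the correct bookkeeping identifications $\gcol{\0} = \pcol{G}$ and $\gcol{V(G)} = \gcol{G}$. No gaps; same approach.
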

        \begin{proof}
            Use Corollary~\ref{corl:partial} and apply Lemma~\ref{lemma:good} $n$ times.
        \end{proof}
    
    \subsection{Completing a good coloring}{\label{sec:ext}}
    
    For each $g \in \gcol{G}$, let $\mathsf{Comp}(g)$ be the set of all proper $\mathcal{H}$-colorings $f \colon V(G) \to V(H)$ that \emphd{complete} $g$, meaning that $f(v) = g(v)$ whenever $g(v) \neq \bl$. 
    
    \begin{lemma}[Completing a good coloring]\label{lemma:comp}
        Let $g \in \gcol{G}$ be a coloring with $k$ blank vertices. Then
        \[
            \bigabs{\mathsf{Comp}(g)} \,\geq\, \Bigparen{\frac{\ell}{2}}^k.
        \]
    \end{lemma}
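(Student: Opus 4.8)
The plan is to pick a uniformly random completion of $g$ and apply the quantitative Lov\'asz Local Lemma (Lemma~\ref{lemma:LLLgeneral}) to bound below the probability that it is proper; since the underlying probability space is a product of the available color sets, such a bound translates directly into the desired count. Let $B \defeq \set{v \in V(G) : g(v) = \bl}$, so $|B| = k$, and recall that $g \in \gcol{G}$ means $|L_g(v)| \geq \ell$ for all $v \in B$ and $\deg_g(\alpha) \leq d$ for all $v \in B$ and $\alpha \in L_g(v)$. Define a random $\mathcal{H}$-coloring $\phi$ by $\phi(v) \defeq g(v)$ for $v \notin B$ and, for $v \in B$, by choosing $\phi(v) \in L_g(v)$ uniformly and independently. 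For an edge $uv \in E(G)$ with $u, v \in B$, let $A_{uv}$ be the event that $\phi(u) \sim \phi(v)$. Since every proper completion of $g$ assigns a color from $L_g(v)$ to each $v \in B$, while conversely any such assignment for which no $A_{uv}$ occurs is a proper completion, we have $\bigabs{\mathsf{Comp}(g)} = \P\brac{\text{no event } A_{uv} \text{ occurs}} \cdot \prod_{v \in B} |L_g(v)| \geq \P\brac{\text{no } A_{uv}} \cdot \ell^k$. Hence it suffices to prove $\P\brac{\text{no } A_{uv}} \geq 2^{-k}$.

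For a blank edge $uv$, let $t_{uv}$ be the number of $\alpha \in L_g(u)$ that correspond to some color in $L_g(v)$; since the $H$-edges between $L(u)$ and $L(v)$ form a matching, each such $\alpha$ has a unique such partner, and therefore $\P\brac{A_{uv}} = t_{uv}/(|L_g(u)|\,|L_g(v)|)$. The crucial estimate is that for every $u \in B$,
\[
    \sum_{w \in N_B(u)} t_{uw} \,\leq\, |L_g(u)| \, d.
\]
Indeed, fixing $\alpha \in L_g(u)$, every blank neighbor $w$ of $u$ that possesses a corresponding color $\beta \in L_g(w)$ contributes a color $\beta \in N(\alpha)$ with $v_\beta = w$ blank and $\beta \in L_g(v_\beta)$, and distinct $w$ give distinct $\beta$ (the sets $L(w)$ being pairwise disjoint, and for fixed $w$ the matching between $L(u)$ and $L(w)$ allowing at most one such $\beta$); hence the number of such $w$ is at most $\deg_g(\alpha) \leq d$, and summing over $\alpha \in L_g(u)$ gives the claim. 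Dividing by $|L_g(u)|\,\ell$ and summing over $u \in B$ yields $\sum_{uv} \P\brac{A_{uv}} \leq kd/(2\ell) = k/50$, and likewise $\sum_{w \in N_B(u)} \P\brac{A_{uw}} \leq d/\ell = 1/25$ for each $u \in B$.

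Now apply Lemma~\ref{lemma:LLLgeneral} with $\Gamma(A_{uv})$ the set of blank edges sharing an endpoint with $uv$ (note $A_{uv}$ is a function of $(\phi(u),\phi(v))$, so it is mutually independent of all $A_{u'v'}$ with $\set{u',v'}\cap\set{u,v}=\0$) and with $x(A_{uv}) \defeq 2\P\brac{A_{uv}} \leq 2/\ell < 1$. Since every member of $\Gamma(A_{uv})$ is of the form $A_{uu'}$ with $u' \in N_B(u)$ or $A_{vv'}$ with $v' \in N_B(v)$, the previous paragraph gives $\sum_{B' \in \Gamma(A_{uv})} x(B') \leq 2\sum_{u' \in N_B(u)} \P\brac{A_{uu'}} + 2\sum_{v' \in N_B(v)} \P\brac{A_{vv'}} \leq 4/25 < 1$, whence $\prod_{B' \in \Gamma(A_{uv})}(1 - x(B')) \geq 1 - 4/25 > 1/2$; consequently the hypothesis $\P\brac{A_{uv}} \leq x(A_{uv}) \prod_{B' \in \Gamma(A_{uv})}(1 - x(B'))$ holds. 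Lemma~\ref{lemma:LLLgeneral} then gives $\P\brac{\text{no } A_{uv}} \geq \prod_{uv}(1 - x(A_{uv})) \geq \exp\Bigparen{-2\sum_{uv} x(A_{uv})} = \exp\Bigparen{-4\sum_{uv}\P\brac{A_{uv}}} \geq \exp(-2k/25) \geq 2^{-k}$, which combined with the reduction above yields $\bigabs{\mathsf{Comp}(g)} \geq (\ell/2)^k$.

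The main obstacle — and the reason the asymmetric Local Lemma is the right tool — is that the naive symmetric version (Corollary~\ref{corollary:LLL}) fails badly here: the per-edge events can have $\P\brac{A_{uv}}$ as large as roughly $1/\ell$ while each participates in up to about $2\Delta$ dependencies, and near the threshold $q \approx \Delta/\log\Delta$ one has $\ell = \Delta^{\Theta(1)}/\log\Delta \ll \Delta$, so $e\,p\,(\mathcal{D}+1) \to \infty$. What rescues the argument is that, although individual bad probabilities may be of order $1/\ell$, their sum over the blank edges at any single vertex is only $d/\ell = 1/25$ — precisely the content of the $\deg_g(\alpha)\leq d$ half of the goodness condition — which is exactly what makes the weighted Local Lemma with $x$ proportional to $\P$ go through. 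Getting this localization right (the displayed estimate, via the matching structure of the DP-cover) is the technical heart of the proof.
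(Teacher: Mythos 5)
Your proof is correct, but it takes a genuinely different route from the paper's. The paper keeps the same random model (uniform independent choice from the lists $L_g(v)$), but it decomposes the bad event into events indexed by \emph{pairs of corresponding colors}: an event $A_{\alpha\beta}$ for each ``dangerous'' edge $\alpha\beta \in E(H)$, meaning $\set{f(v_\alpha) = \alpha,\, f(v_\beta) = \beta}$. After trimming each list to exactly $\ell' = \ceiling{\ell}$ colors, every such event has the \emph{same} probability $p = 1/(\ell')^2$, and the goodness bounds $\deg_g(\gamma) \leq d$ control $|\Gamma(A_{\alpha\beta})| \leq 2d\ell'$, so the symmetric Corollary~\ref{corollary:LLL} applies with $e p(\mathcal{D}+1) \approx 2ed/\ell = 2e/25 < 1$. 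You instead index events by \emph{blank edges of $G$}: $A_{uv} = \set{\phi(u) \sim \phi(v)}$, which have wildly varying probabilities up to $\approx 1/\ell$; the symmetric LLL then fails for exactly the reason you identify ($e p (2\Delta+1) \to \infty$ near $q \approx \Delta/\log\Delta$), and you repair this by invoking the full asymmetric Lemma~\ref{lemma:LLLgeneral} with $x(A_{uv}) \propto \P[A_{uv}]$, using the observation that the \emph{local sum} $\sum_{w \in N_B(u)} \P[A_{uw}] \leq d/\ell = 1/25$ — which is precisely the same double-counting of $\deg_g$ over the list that the paper uses to bound $|\Gamma|$, just packaged differently. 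The two proofs thus use the goodness hypotheses identically and land on the same constant $(\ell/2)^k$; what each buys is largely stylistic. The paper's fine-grained events make the symmetric LLL applicable at the cost of the list-trimming step; your coarser per-edge events avoid trimming and keep the event set small (one per edge of $G$ rather than up to $kd\ell'$), at the cost of needing the asymmetric LLL with carefully chosen weights. Your closing paragraph explaining why the symmetric form fails here and why ``$x$ proportional to $\P$'' rescues the argument is a nice piece of analysis that is not in the paper (the paper sidesteps this issue entirely by the choice of event family).
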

    \begin{proof}
        We will apply the Quantitative Local Lemma (Corollary~\ref{corollary:LLL}) to obtain a lower bound on $\bigabs{\mathsf{Comp}(g)}$. Set $\ell' \defeq \ceiling{\ell}$. By removing some colors from $L(v)$ for each $v \in V(G)$ if necessary, we may arrange that $|L_g(v)| = \ell'$ for every blank vertex $v$. Now we assign to each blank vertex $v$ a color from $L_g(v)$ uniformly at random. Let $f$ be the resulting coloring of $G$.
    
        Say that an edge $\alpha \beta \in E(H)$ is \emph{dangerous} if $g(v_\alpha) = g(v_\beta) = \bl$ and $\alpha \in L_g(v_\alpha)$, $\beta \in L_g(v_\beta)$, where $v_\alpha$, $v_\beta \in V(G)$ are the underlying vertices of $\alpha$ and $\beta$ respectively. For each dangerous edge $\alpha\beta\in E(H)$, let $A_{\alpha\beta}$ be the event that $f(v_\alpha) = \alpha$ and $f(v_\beta) = \beta$. By construction, $f$ is a proper $\mathcal{H}$-coloring of $G$ if and only if none of the events $A_{\alpha \beta}$ happen.
        
        Since $g$ is good, for every dangerous edge $\alpha \beta \in E(H)$, we have
        \[
            \P\brac{A_{\alpha\beta}} \,=\, \frac{1}{|L_g(V_\alpha)|\,|L_g(v_\beta)|} \,\le\, \frac{1}{\ell^2} \,\eqqcolon\, p.
        \]
        For every event $A_{\alpha\beta}$, let $\Gamma(A_{\alpha\beta})$ be the set of all events $A_{\gamma \delta}$ with $\set{v_\alpha, v_\beta} \cap \set{v_\gamma, v_\delta} \neq \0$. Then $A_{\alpha\beta}$ is mutually independent from the events not in $\Gamma(A_{\alpha\beta})$. Since $g$ is good, we have
        \[
            |\Gamma(A_{\alpha\beta})| \,\leq\, \sum_{\gamma \in L_g(v_\alpha)} \deg_g(\gamma) \,+\, \sum_{\delta\in L_g(v_\beta)} \deg_g{(\delta)} \,\le\, 2d\ell'.
        \]
        Therefore, we may apply Corollary \ref{corollary:LLL} with $\mathcal{D} \defeq 2d\ell'$. We now check that
        \[
            e p (\mathcal{D} + 1) \,=\, e \cdot \frac{1}{\ell^2} \cdot 2d\ell' \,=\, \frac{2e}{25} + o(1) \,<\, 1.
        \]
        Since there are at most $k d \ell'$ dangerous edges, Corollary~\ref{corollary:LLL} yields
        \[
            \P\left[\bigcap_{\alpha \beta} \overline{A_{\alpha\beta}}\right] \,\geq\, \left(1 - \frac{1}{2d\ell' + 1}\right)^{kd\ell'} \,\geq\, \exp\left(- \frac{k}{2}\right) \,\geq\, 2^{-k},
        \]
        where in the second inequality we use that $1 - 1/(x+1) \geq \exp(-1/x)$ for all $x > 0$. Finally, since $|L_g(v)| \geq \ell$ for every blank vertex $v$, we conclude that
        \[\bigabs{\mathsf{Comp}(g)} \, \geq \, 2^{-k} \cdot \ell^k \,=\, \Bigparen{\frac{\ell}{2}}^k. \qedhere\]
\end{proof}

    \subsection{Finishing the proof of Theorem~\ref{theo:mainDP}}{\label{sec:finish}}
    
    We are finally ready to complete the proof of Theorem~\ref{theo:mainDP}. Let $\col{G}$ denote the set of all proper $\mathcal{H}$-colorings of $G$. Set $\eta \defeq \exp\left(-q^{\eps/2}/600\right)$. By Corollary~\ref{corl:good}, we have
    \begin{equation}\label{eq:goodbound}
        |\gcol{G}| \,\geq\, (1 - \eta)^n \, \left(1 - \frac{1}{q}\right)^m \, q^n.
    \end{equation}
    Define a bipartite graph $B$ with parts $\gcol{G}$ and $\col{G}$ by joining each $g \in \gcol{G}$ to $f \in \col{G}$ if and only if $f \in \mathsf{Comp}(g)$, i.e., if $f(v) = g(v)$ for all $v$ such that $g(v) \neq \bl$. By Lemma~\ref{lemma:comp}, for every $g \in \gcol{G}$ with $k$ blank vertices,
    \[
        \deg_B(g) \,=\, |\mathsf{Comp}(g)| \,\geq\, \left(\frac{\ell}{2}\right)^k.
    \]
    On the other hand, if $f \in \col{G}$, then $f$ has at most ${n \choose k}$ neighbors in $\gcol{G}$ with $k$ blank vertices, since every neighbor of $f$ is obtained by uncoloring a subset of $V(G)$. Therefore,
    \begin{align}
        \gcol{G} \,=\, \sum_{g \in \gcol{G}} \sum_{f \in N_B(g)} \frac{1}{\deg_B(g)} \,&=\, \sum_{f \in \col{G}} \sum_{g \in N_B(f)} \frac{1}{\deg_B(g)} \nonumber\\
        &\leq\, \sum_{f \in \col{G}} \sum_{k=0}^n \left(\frac{2}{\ell}\right)^{k} {n \choose k} \,=\, \left(1 + \frac{2}{\ell}\right)^n |\col{G}|. \label{eq:colorBound}
    \end{align}
        Combining \eqref{eq:goodbound} and \eqref{eq:colorBound}, we see that, for large enough $\Delta$,
    \begin{align*}
    |\col{G}| \,&\geq\, \biggparen{\frac{1-\eta}{1+\frac{2}{\ell}}}^n \, \Bigparen{1-\frac{1}{q}}^m \, q^n \\
    &\geq\, \Bigparen{1-\frac{2}{\ell}}^n\,\Bigparen{1-\frac{1}{q}}^m\,q^n\,=\, (1-\delta)^n\,\Bigparen{1-\frac{1}{q}}^m\,q^n,
    \end{align*}
    where $\delta = 4\exp(\Delta/q)/q$, as desired.

    \section{Sharpness examples}\label{sec:upper}
    
    
    
    
    In this section we prove Theorem~\ref{theo:random}. 
    Before presenting the proof, we introduce some necessary definitions and notation, which are similar to those used in Wormald's survey paper \cite{wormald_1999}.
    Let $\mathcal{G}_{n, \Delta}$ be the uniform probability space of $\Delta$-regular graphs on $n$ vertices, where we assume that $\Delta n$ is even. The following procedure for sampling a graph $G \sim \mathcal{G}_{n,\Delta}$, known as the \emphd{pairing model}, was introduced by Bollob\'as \cite{Bollobas_regular}. Fix a set $W$ of $\Delta n$ points partitioned into $n$ cells $W_1$, \ldots, $W_n$, each of size $\Delta$. A perfect matching of the points in $W$ into $\Delta n/2$ pairs is called a \emphd{pairing}. Let $\mathcal{P}_{n,\Delta}$ be the uniform probability space of all pairings. To each $P\in \mathcal{P}_{n,\Delta}$, we associate a $\Delta$-regular multigraph $G(P)$ with vertex set $[n]$, where for each pair $xy \in P$ with $x \in W_i$ and $y \in W_j$, we add an edge between $i$ and $j$. Note that $G(P)$ may have loops and multiple edges. However, for fixed $\Delta$ and large enough $n$, the probability that $G(P)$ is simple is separated from $0$, meaning that 
    \[
        \P[\text{$G(P)$ is simple}] \,\geq\, c_\Delta,
    \]
    for all large enough $n$, where $c_\Delta > 0$ depends only on $\Delta$ \cite[Theorem 2.2]{wormald_1999}. It is not hard to see that, conditioned on the event that $G(P)$ is simple, the distribution of $G(P)$ coincides with $\mathcal{G}_{n,\Delta}$.
    
    
    Fix any $q$-coloring $f \colon [n] \to [q]$. Then $f$ defines a partition of the cells $W_1$, \ldots, $W_{\Delta}$ into $q$ color classes $C_1$, \ldots, $C_q$. The following algorithm generates a uniformly random pairing $P \sim \mathcal{P}_{n, \Delta}$:
    
    \begin{algorithm}[H]
    \caption{\textit{Generator}}\label{alg:cap}
    \begin{algorithmic}[1]
    \State $U \gets W$, $P \gets \0$;
    \For{$i = 1$, \ldots, $\Delta n/2$}
        \State \text{choose} $x$ \text{arbitrarily from} $C_{max}$, where $|C_{max}|$ is maximum among $|C_1|$, \ldots, $|C_q|$;
        \State \text{choose} $y$ \text{uniformly at random from} $U\setminus\{x\}$;
        \State $P \gets P \cup \{xy\}$, $U \gets U\setminus \{x,y\}$;
        \For{$j = 1$, \ldots, $q$}
            \State $C_j \gets C_j \setminus \set{x,y}$;
        \EndFor
    \EndFor
    \end{algorithmic}
    \end{algorithm}

    At the start of the $i$-th iteration of the outer loop in Algorithm~\ref{alg:cap}, we have $|U| = \Delta n - 2(i-1)$. By the choice of $C_{max}$, this implies that $|C_{max}| \geq (\Delta n - 2(i-1))/q$. Therefore,
    \[
        \P \brac{y \not\in C_{max}} \,=\, 1- \frac{|C_{max}| - 1}{|U| - 1} \,\leq \, \Bigparen{1-\frac{1}{q}} \Bigparen{1+\frac{1}{\Delta n-2i + 1}}.
    \]
    Since each step in the algorithm is independent from the previous ones, we have 
    \begin{align*}
        \P \brac{\text{$f$ is a proper coloring of $G(P)$}} \,&\le\, \prod_{i=1}^{\Delta n/2}  \Bigparen{1-\frac{1}{q}} \Bigparen{1+\frac{1}{\Delta n-2i + 1}}\\
        &\le\, \Bigparen{1-\frac{1}{q}}^{\Delta n/2} \exp{\left(\frac{1}{\Delta n-1}+\frac{1}{\Delta n-3}+\cdots+1\right)}. 
    \end{align*}
    Observe that, for $n$ large enough as a function of $\Delta$,
    \begin{align*}
        \frac{1}{\Delta n-1}+\frac{1}{\Delta n-3}+\cdots+1 \,&\leq\, \frac{1}{2}\left(\frac{1}{\Delta n - 1} + \frac{1}{\Delta n - 2} \right) + \frac{1}{2}\left(\frac{1}{\Delta n - 3} + \frac{1}{\Delta n - 4} \right) + \ldots + \frac{1}{2}(1 + 1) \\
        &= \frac{1}{2} \left(H_{\Delta n - 1} + 1 \right)\\ &\leq \frac{1}{2} \big(\log (\Delta n) + 2\big)\\
        &< \log n,
    \end{align*}
    where $H_{\Delta n - 1} \defeq 1/(\Delta n - 1) + 1/(\Delta n - 2) + \cdots + 1$ is the $(\Delta n - 1)$-th harmonic number. Thus, we may conclude that
    \[
        \P \brac{\text{$f$ is a proper coloring of $G(P)$}} \,\leq\, \Bigparen{1-\frac{1}{q}}^{\Delta n/2} n.
    \]
    
    Now let $X$ be the random variable equal to the number of proper $q$-colorings of $G(P)$. Then 
    \[\E\brac{X} \,=\, \sum_{f} \P \brac{\text{$f$ is a proper coloring of $G(P)$}} \,\le\, \Bigparen{1-\frac{1}{q}}^{\Delta n /2} n q^n.\]
    Set $\gamma \defeq 2(\log n)/n$. By Markov's inequality, we have 
    \begin{equation}\label{eq:Markov}
        \P \brac{X \geq (1+\gamma)^n \Bigparen{1-\frac{1}{q}}^{\Delta n/2} q^n } \,\le\, \frac{n }{(1+\gamma)^n}.
    \end{equation}
    The right-hand side of \eqref{eq:Markov} approaches $0$ as $n\to \infty$.
    Therefore, we see that
    \[
        X \,\leq\, (1+\gamma)^n \Bigparen{1-\frac{1}{q}}^{\Delta n /2} q^n
    \]
    asymptotically almost surely. On the other hand, as $n \to \infty$, the probability that $G(P)$ is simple and triangle-free approaches a positive constant depending only on $\Delta$ \cite[Theorem 2.12]{wormald_1999}. Thus, the number of proper $q$-colorings of a random regular graph $G \sim \mathcal{G}_{n ,\Delta}$ is at most
    \[
        (1+\gamma)^n \Bigparen{1-\frac{1}{q}}^{\Delta n /2} q^n
    \]
    asymptotically almost surely, and Theorem~\ref{theo:random} follows.
    
    \section{Open problems}
    
    It is not known if the constant factor in Molloy's Theorem~\ref{theo:Molloy} is optimal. Ignoring the lower order terms, the best known lower bound on the chromatic number for triangle-free graphs $G$ of maximum degree $\Delta$ is $(1/2 + o(1))\Delta/\log \Delta$ due to Frieze and \L{}uczak \cite{FL}, which holds for random $\Delta$-regular graphs with high probability. It is therefore possible that the conclusion of Theorem~\ref{theo:main} remains valid for $q \geq (1/2 + o(1))\Delta/\log\Delta$. To challenge the reader, we state this as a conjecture:
    
    \begin{conj}\label{conj:1/2}
        If $G$ is a triangle-free graph of maximum degree $\Delta$ and $q \geq (1/2+\epsilon)\Delta/\log\Delta$ for some $\epsilon > 0$, then
        \[
            c(G,q) \,\geq\, \left(1 - \frac{1}{q}\right)^m ((1-o_\epsilon(1))q)^n,
        \]
        where $n = |V(G)|$, $m = |E(G)|$, and $o_\epsilon(1)$ stands for a function of $\Delta$ and $\epsilon$ that approaches $0$ as $\Delta$ tends to $\infty$ while $\epsilon$ remains fixed.
    \end{conj}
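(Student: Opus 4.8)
The plan is to replace the single-pass argument of Section~\ref{sec:proof} by an \emph{iterative} coloring scheme, because a single pass provably cannot reach the regime $q = \Theta(\Delta/\log\Delta)$ with constant below $1$: when $q = (1/2+\epsilon)\Delta/\log\Delta$ one has $\exp(\Delta/q) = \Delta^{2/(1+2\epsilon)}$, so the average number $\ell = q/(2\exp(\Delta/q))$ of colors still available at a vertex after a random partial coloring tends to $0$, and Lemmas~\ref{lemma:coupon}--\ref{lemma:comp} collapse. So I would color in $T = T(\epsilon,\Delta)$ rounds: in round $t$, offer each still-uncolored vertex a small random batch of colors, run a Rosenfeld-/local-lemma-type counting step to produce many partial colorings with controlled leftover list sizes $\ell_t$ and color-degrees $d_t$, and pass to round $t+1$ on a sparser residual instance; the final count would come, as in \S\ref{sec:finish}, from multiplying the per-round lower bounds and absorbing the double-counting overcounts into the $1 - o_\epsilon(1)$ factor.

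The structural input that would have to be exploited far more aggressively than in this paper is triangle-freeness. Since $N(v)$ is independent, a color $\alpha$ surviving in $L_f(v)$ is, in round $t$, demanded only by the uncolored neighbors of $v$ that independently failed to use $\alpha$, and the hope is to show --- via Chernoff/Azuma concentration in the style of Lemma~\ref{lemma:coupon} --- that the effective degree in the ``list graph'' contracts by a constant factor per round while the list sizes contract more slowly, so that after $T$ rounds the residual instance is colorable by the naive greedy bound. This is the mechanism behind Johansson's and Molloy--Reed's ``wasteful coloring procedure''; the genuinely new ingredient needed here is a \emph{counting} version of it, carrying a multiplicative lower bound on the number of proper extensions through every round rather than a mere existence statement.

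The hard part is that Conjecture~\ref{conj:1/2} is strictly stronger than improving the constant in Molloy's Theorem~\ref{theo:Molloy} from $1$ to $1/2$: choosing $q$ to be an integer in $[(1/2+\epsilon)\Delta/\log\Delta,\, (1+\epsilon)\Delta/\log\Delta)$, positivity of the conjectured right-hand side already forces $\chi(G)\le q$, and that improvement is a long-standing open problem for which the present second-moment and entropy-compression technology is not known to suffice --- the best lower bound, $(1/2+o(1))\Delta/\log\Delta$ for random $\Delta$-regular graphs due to Frieze and \L{}uczak, leaves open even whether the true threshold is $1$ rather than $1/2$. So a realistic intermediate goal would be to first prove the \emph{existence} version, a $(1/2+\epsilon)\Delta/\log\Delta$ bound on $\chi_{DP}$, and only then attempt to upgrade it to a count, or to restrict attention to classes where the degree-contraction heuristic is rigorous --- random regular graphs, or graphs in which every neighborhood spans $o(\Delta^2/\log\Delta)$ edges. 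Keeping the exponent error at $o_\epsilon(1)$ uniformly over all $T$ rounds is where the bookkeeping is expected to be most delicate.
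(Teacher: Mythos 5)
This statement is a \emph{conjecture} in the paper; the authors offer no proof of it, so there is nothing to compare your proposal against on the merits. What you have written is not a proof either, and to your credit you say so. Your diagnosis of why the conjecture is out of reach matches the paper's own commentary almost point for point: the paper observes that the error term $\delta = 4\exp(\Delta/q)/q$ (equivalently, $2/\ell$) blows up once $q$ drops below $\Delta/\log\Delta$, exactly the computation $\exp(\Delta/q)=\Delta^{2/(1+2\epsilon)}$ and $\ell\to 0$ that you carry out; and the paper explicitly notes that positivity of the conjectured right-hand side already forces $\chi(G)\le (1/2+o(1))\Delta/\log\Delta$, which (together with the stronger statement $\alpha(G)\ge n/q$) is regarded as a very hard open problem, with the Frieze--\L{}uczak random regular lower bound being the only evidence that the threshold is as low as $1/2$. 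You also correctly point to the Erd\H{o}s--R\'enyi sparse random graph results of \cite{Cond1,Cond2} as the main evidence in favor, which the paper does as well.

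Your proposed line of attack --- an iterative, multi-round ``wasteful coloring procedure'' in the style of Johansson and Molloy--Reed, upgraded from an existence statement to a per-round multiplicative count via a Rosenfeld-type induction, with the final bound assembled by a double-counting step as in \S\ref{sec:finish} --- is a sensible heuristic and identifies the right missing ingredient (a counting version of the nibble), but it is not close to a proof. The genuine gap is precisely the one you name: no one knows how to push even the existence statement $\chi(G)\le (1/2+\epsilon)\Delta/\log\Delta$ through any number of rounds, because the iterated list/degree contraction estimates that work down to constant $1$ are not known to survive below it. Since the paper itself leaves the conjecture open (and even floats the possibility that it may be false and worth attacking by a disproof), the correct assessment is that your proposal is an honest and accurate research plan, not a proof, and it is consistent with everything the paper says about why this is hard.
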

    
    A proof of Conjecture~\ref{conj:1/2} would be an incredibly ambitious result, since under its assumptions, proving the bound $c(G,q) > 0$ \ep{i.e., $\chi(G) \leq q$}, or even $\alpha(G) \geq n/q$, is already considered a very hard open problem. This makes Conjecture~\ref{conj:1/2} a good target for a \emph{dis}proof, which may be more feasible than obtaining a new lower bound on $\chi(G)$ or a new upper bound on $\alpha(G)$.
    
    There is some evidence for Conjecture~\ref{conj:1/2} coming from random graphs. Bapst, Coja-Oghlan, Hetterich, Rassmann, and Vilenchik \cite[Theorem 1.1]{Cond1} showed that the conclusion of Conjecture~\ref{conj:1/2} holds with high probability for the Erd\H{o}s--R\'enyi random graph $\mathcal{G}(n, \Delta/n)$ when $\Delta$ is large enough. This result was later extended to all $\Delta \geq 3$ by Coja-Oghlan, Krzakala, Perkins, and Zdeborová \cite[Theorem 1.2]{Cond2}. We are not aware of an analogous result for the random $\Delta$-regular graph $\mathcal{G}_{n, \Delta}$, but it seems plausible that it could be derived using the methods of \cite{Cond3}. 
    
    The value $\delta = 4\exp(\Delta/q)/q$ of the error term in Theorem~\ref{theo:main} ``blows up'' when $q < \Delta/\log\Delta$. This means that proving Conjecture~\ref{conj:1/2} would likely require reducing the value of the error term even for $q \geq (1+o(1))\Delta/\log\Delta$. We feel that this is an interesting problem in its own right:
    
    \begin{prob}
        Can the error term $\delta$ in the statement of Theorem~\ref{theo:main} be asymptotically improved?
    \end{prob}
    
    \subsubsection*{Acknowledgments}
    
    We thank Hemanshu Kaul and Will Perkins for helpful comments on an earlier version of this paper and for drawing our attention to several relevant references. We are also grateful to the anonymous referees for their feedback.
    
    \printbibliography
\end{document}